\documentclass[12pt]{article}
\topmargin -0.5 in
\oddsidemargin 0 in
\headheight 0 in
\parindent 0 cm
\evensidemargin 0 in
\textwidth 7.75 in

\usepackage{amsmath, amssymb,amsthm}
\usepackage[pdftex]{graphicx}
\usepackage[T1]{fontenc}
\usepackage{booktabs}

\theoremstyle{definition}

\newtheorem{lemma}{Lemma}

\newtheorem{thm}{Theorem}
\newtheorem{example}{Example}
\newtheorem{cor}{Corollary}
\newtheorem{rem}{Remark}

\setlength{\textheight}{9.5in}
\setlength{\evensidemargin}{-.3025in}
\setlength{\oddsidemargin}{-.7025in}
\setlength{\topmargin}{-.8125in}
\setlength{\parskip}{0in}
\setlength{\parindent}{0in}

\newcommand{\Y}{\textbf{Y}}
\newcommand{\y}{\textbf{y}}
\newcommand{\X}{\textbf{X}}
\newcommand{\EE}{\mathbb{E}}
\newcommand{\A}{\mathcal{A}}
\newcommand{\FF}{\mathcal{F}}
\newcommand{\LL}{\mathcal{L}}

\newcommand{\RR}{\mathbb{R}}
\newcommand{\II}{\mathbb{I}}

\newcommand{\var}{\mathbb{V}\text{ar}}

\openup3\jot

\begin{document}


\begin{center}
\textbf{\large On continuous distribution functions, minimax and best invariant estimators, and integrated balanced loss functions  \footnote{\today}}

 {\sc
Mohammad
Jafari Jozani$^{a,}$\footnote{Corresponding author: m$_{-}$jafari$_{-}$jozani@umanitoba.ca},  Alexandre Leblanc$^{a}$ and 
 \' Eric
 Marchand$^{b}$,
} \\

{\it a University of Manitoba, Department of Statistics, Winnipeg, MB, CANADA, R3T 2N2 } \\

 {\it b  Universit\'e de
    Sherbrooke, D\'epartement de math\'ematiques, Sherbrooke, QC,
    CANADA, J1K 2R1} \\

\end{center}

\abstract{
We consider the problem of estimating a continuous distribution function $F$, as well as meaningful functions
$\tau(F)$ under a large class of loss functions. We obtain best invariant estimators and establish their
minimaxity for H\"{o}lder continuous $\tau$'s and strict bowl-shaped losses with a bounded derivative. 
We also introduce and motivate the use of integrated balanced loss functions which combine the criteria
of an integrated distance  between a decision $d$ and $F$, with the proximity of $d$ with a target
estimator $d_0$.  Moreover, we show how the risk analysis of procedures under such an integrated balanced
loss relates to a dual risk analysis under an ``unbalanced'' loss, and we derive best invariant estimators,
minimax estimators, risk comparisons, dominance and inadmissibility results.   Finally, we expand on various illustrations and  
applications relative to maxima-nomination sampling, median-nomination  
sampling, and a case study related to bilirubin levels in the blood of babies suffering from  
jaundice.
 
}

\medskip
\noindent
\textbf{Keywords:} Balanced loss; best invariant estimator; cumulative distribution function; inadmissibility;
integrated loss; maxima-nomination sampling;  median-nomination sampling; 
minimax; nonparametric estimation;  risk function; strict bowl-shaped loss. 

\medskip
\noindent
\textbf{AMS  2010 Subject Classification:} Primary: 62C20; Secondary: 62G05

%
\section{Introduction}
%

An appealing and wide ranging formulation for estimating a continuous distribution function (cdf) $F$ based on $X=(X_1, \ldots, X_n)$, where  $X_i$'s are independently and identically distributed (i.i.d.)  on $I=(a,b) \subseteq \mathbb{R}$ with cdf $F$, is to measure the discrepancy between an estimate
$d(\cdot):\mathbb{R} \to [0,1]$ and $F$ as 
\begin{equation}
\label{loss}
\int_{\mathbb{R}} \, \rho(d(t)-F(t)) \, H(F(t)) \, dF(t)\,,
\end{equation}
where $\rho$ is strictly bowl-shaped on its domain with $\rho(0)=0$, $\rho'(z)<0$ for $z<0$, $\rho'(z)>0$ for $z>0$, and
$H$ is a continuous and positive weight function.  Aggarwal (1955) introduced such a formulation for Cram\'er-von Mises loss with
$\rho(z)=|z|^r$; $r \in \{1,2, \ldots \}$, considered an invariance structure relative to  the group of continuous and strictly increasing transformations,
and obtained best invariant estimators of $F$.  For instance, the empirical distribution function $F_n$ is the best invariant estimator of $F$ under loss (\ref{loss})
with $\rho(z)=z^2$ and $H(z)=(z(1-z))^{-1}$ (e.g., Ferguson, 1967, Section 4.8).  Now, in terms of the larger class of (not necessarily invariant) procedures, challenging issues with regards to the potential minimaxity and admissibility of the best invariant procedure have been addressed by Dvoretzky et al.  (1956), Phadia (1973), Cohen and  Kuo (1985), Brown (1988), Yu (1989), and Yu and  Chow (1991).  
Namely, Yu (1992) established the minimaxity of the best invariant procedure in Aggarwal's setup and analog minimaxity findings have been obtained by 
Mohammadi and van Zwet (2002, entropy loss), Ning and Xie (2007, Linex loss), and St\k{e}pie\'n-Baran (2010, strictly convex $\rho$).  Parallel developments 
for the alternative Kolmogorov-Smirnov loss $\sup_{t \in \mathbb{R}} |d(t)-F(t)|$ were given by Brown (1988), Friedman et al.\ (1988), and 
Yu and Phadia (1992). 

In this paper, we seek to extend St\k{e}pie\'n-Baran's minimax result to loss functions of the form
\begin{equation}
\label{lossG}
L_{\rho,\tau}(d,F)\,=\, \int_{\mathbb{R}}  \, \rho(\tau(d(t))-\tau(F(t))) \, dF(t)\,,
\end{equation}
with $\rho$ strict bowl-shaped, differentiable almost everywhere (a.e.), and with $\tau(\cdot)$ a continuous and strictly monotone function on $[0,1]$.

A first motivation here is to provide analytical results applicable to non-strict convex choices of $\rho$ which are not covered by previous findings even for identity
$\tau$.  As well,  the  loss functions in \eqref{lossG} are flexible enough to include loss functions of the form
\begin{equation}
\label{losslogG}
 \int_{\mathbb{R}}  \, \rho_0(\frac{d(t)}{F(t)}) \, dF(t)\,,
\end{equation}
contrasting directly the ratios $\frac{d(t)}{F(t)}$, as opposed to the differences $d(t)-F(t)$, with $\rho \equiv \rho_0 \circ \log$, and $\rho_0$ strict bowl-shaped.
Notice here that the strict bowl-shapedness of $\rho$ and $\rho_0$ are equivalent, which is not the case as for convexity.  An example of (\ref{losslogG}) is  the integrated entropy loss with 
$\rho_0(z)=z^{-1} + \log(z) -1$, (see Mohammadi and van Zwet, 2002).
 The  losses in \eqref{lossG} also encompass integrated $L^2$ losses of the form 
 \begin{equation}
L_{\tau}(d,F) = \int_{\RR}  \big( \tau(d(t)) - \tau(F(t)) \big)^2  dF(t),
\label{eq.loss}
\end{equation}
which correspond of course to $\rho(z)=z^2$ in \eqref{lossG}.  An interesting example of  \eqref{eq.loss} is the so-called precautionary loss function with $\tau(z)=e^{az}$; $a \neq 0$; which is nicely motivated from a practical point of view (e.g.,  Sch\"abe, 1991; Norstr{\o}m, 1996). For more examples see Jafari Jozani and Marchand (2007).

Another motivation to study integrated losses
of the form (\ref{lossG}) with non-identity $\tau$ resides in the equivalence of the performances of estimates $d(\cdot)$ of $F$ under loss (\ref{lossG}) 
with estimates $d^*(\cdot) \equiv \tau(d(\cdot))$ of $\tau(F)$ under loss   
\begin{equation}
\label{lossGtau}
\int_{\mathbb{R}} \, \rho(d^*(t) -\tau(F(t))) \, dF(t)\,.
\end{equation}
Although the problems are mathematically equivalent, they emanate from different practical perspectives.  Indeed, for the latter problem,
our interest lies in estimating a meaningful function  $\tau(F(t)), t \in \mathbb{R}$, such as a logarithmic function $\log(1+z)$, polynomials $z^m$ and $1-(1-z)^m$ representing for instance the cdf's of the minimum and maximum of $m$ independent copies generated from $F$, and similarly $z^{1/k}$ and $1-(1-z)^{1/k}$ arising in maxima or minima nomination samples when the set size is  an integer $k\geq 1$ (e.g., Wells and Tiwari, 1990).
Other interesting choices, further discussed in Examples \ref{odds}, \ref{oddsc},  and \ref{minimaxapplications}, are the odds-ratio $\tau(z)=\frac{z}{1-z}$ and the log odds-ratio $\tau(z)=\log(\frac{z}{1-z})$.  However, even in cases where a best invariant estimator exists, these choices will not satisfy a H\"{o}lder continuity condition on $\tau$ that is required for the minimaxity of the best invariant estimator to follow from our Theorem \ref{thm.minimax}.

In Section 2.1, we provide preliminary results and examples for the best invariant estimator, expand on issues related to the role of the action space, the presence of best invariant solutions which are not genuine cdf's, and corresponding adjustments
which we present as best constrained invariant estimators of $F$ and $\tau(F)$ (Remark \ref{genuine}).  In Section 2.2, we pursue with a general minimax result (Theorem \ref{thm.minimax}).  To this end, we exploit a key result from Yu and Chow (1991), we require $\rho$ to have a bounded derivative, and we work with a H\"{o}lder continuity assumption for $\tau$.    This minimax result can be viewed as an extension of St\k{e}pie\'n-Baran's (2010) minimax result to losses $L_{\rho, \tau}(d,F)$ with either strict bowl-shaped $\rho$ and/or non-identity $\tau$.  We also point out  (Theorem \ref{minimax-weighted}) that the best invariant and minimax properties  are  preserved for a class of weighted integrated loss functions, which will play a critical role in Section 3. 

In Section 3, as an alternative, we propose and motivate the use of an integrated balanced loss function in the spirit of Jafari Jozani, Marchand and Parsian (2006).  This loss function, presented in the context of estimating $\tau(F)$, is of the form 
$$
 L_{\omega,d_0}(d,F)= \int_{\RR} \{w(x,t)
(d(t)-d_0(t))^2 + (1-w(x,t)) (d(t)-\tau(F(t))^2 \} 
\, dF(t)\,$$
with $d_0$ being the target estimator of $\tau(F)$, and $w(\cdot, \cdot) \in [0,1]$ is a data dependent  weight function which permits one to combine the criteria that the estimate $d(\cdot)$ be close to the target  estimator  $d_0(\cdot)$ (which can be chosen for instance as $\tau(F_n)$, with $F_n$ being the empirical cdf)  with  integrated squared error $L_{\tau}(\tau^{-1}(d), F)$ as in \eqref{eq.loss}.  We describe explicitly how the performance of estimators of $\tau(F)$ under loss $L_{\omega,d_0}$ relates to the performance of a dual estimator of $\tau(F)$ under ``unbalanced'' loss $L_{\omega,d_0}$ with $\omega\equiv0$.  This leads to the determination of the best invariant estimator (Theorem \ref{blfbestinvariant}), as well as a proof of its minimaxity (Theorem \ref{blfminimax}) among all estimators for cases where both $w$ and $d_0$ satisfy an invariance requirement (i.e., being functions of the $X_i$'s only through their order statistics).  Moreover, the same duality between the ``balanced'' and ``unbalanced'' cases, along with known results for the ``unbalanced'' case leads to dominance and inadmissibility results (Theorem \ref{dominance}).
We advocate the use of such balanced integrated losses to provide a flexible and natural tool for estimating $F$.  In particular, it permits us to set the weight  $w(x, t)$ equal to $1$ whenever $F_n(t)$ takes the values $0$ or $1$, leading to a  best invariant (and minimax) estimator that is  a genuine cdf.  

Section 4 is devoted to  
applications and illustrations relative to   
maxima-nomination sampling and median-nomination sampling.   In Section 5,   an actual  
data set, pertaining to bilirubin levels in the blood of babies  
suffering from jaundice, is analyzed via an integrated balanced loss  
function.  In Section 6, we provide some concluding remarks.  Finally,    the proofs and  further complementary developments  
with respect to balanced loss functions are presented in the Appendix.

%
\section{Best invariant and minimax estimators of $F$ and $\tau(F)$}
%

\subsection{Preliminary results and the best invariant estimator}

Let $\X = (X_1, \ldots, X_n)$  be a random sample of size $n \geq 2$ from an unknown continuous distribution function $F$ supported on $(a,b)$,  
and denote 
 its associated order statistics by 
$\Y = (Y_1, \ldots,Y_n)$. Define also $Y_0=a$ and $Y_{n+1}=b$. Let
$
\mathcal{A}=\{ d(\cdot) : d (\cdot)$  is a nondecreasing function from  $\mathbb{R} \text{ onto } [0,1] \}
$
be the action space, and 
$
\FF = \{ F: F$  is a continuous cumulative distribution function on $ \mathbb{R} \}
$
be the parameter space.  Consider estimating $F$ under the  integrated loss $L_{\rho,\tau}(d, F)$ in \eqref{lossG},
$\rho$ strict bowl-shaped and differentiable a.e., and assume
without loss of generality that $\tau$ is strictly increasing (otherwise, transform $\tau$ to $-\tau$).
For an estimator $d(X;\cdot)$ of $F$, we define the corresponding frequentist risk as
$R_{\rho,\tau}(d(X;\cdot), F)= E_F L_{\rho,\tau}(d(X;\cdot),F)$.

In his seminal paper, Aggarwal~(1955) showed that, under the group of continuous and strictly increasing
transformations,  the class of invariant estimators considered here
leads to estimators which are nondecreasing step functions with jumps at the observed order statistics, in
other words,  of the form
\begin{equation}
d(\Y; t) = \sum_{i=0}^n u_i\, \II(Y_i \leq t < Y_{i+1}),
\label{eq.invest}
\end{equation}
for $t\in(a, b)$, where $0 \leq u_0  \leq \ldots \leq u_n \leq 1$, and $\II(B)$ denotes the indicator function of a set $B$.
Our next results identify the best invariant estimator of
$F$ in the current setup.  Here and throughout, we set $T_i$, $i=0, \ldots, n$, to be random variables such that
$$ T_i \sim \text{Beta}(i+1,n-i+1)\,,\; \hbox{  with pdf } f_{T_i}(\cdot)\,.$$
 
\begin{thm}
\label{thm.best}
A best invariant estimator of $F$, whenever it exists, under loss $L_{\rho,\tau}(d,F)$ in (\ref{lossG}), is given by $
d^*(\Y; t) = \sum_{i=0}^n u^*_i \II(Y_i \leq t < Y_{i+1}),
$
where
$u^*_i$ is the Bayes point estimate of $p$ for the model $B|p \sim \hbox{Bin}(n,p)$, the observed $B=i$, the prior
$p \sim U(0,1)$ (i.e., posterior for $p$ is $\text{Beta}(i+1,n-i+1)$), and loss
$L(d,p)=\rho(\tau(d) -\tau(p))$.   The risk of  $d^*(\Y, t)$ is constant in $F$ and given by
$$ R_{\rho,\tau}(d^*, F)= \frac{1}{n+1} \; \sum_{i=0}^n  \int_0^1 \rho(\tau(u_i^*) -\tau(t)) \, f_{T_i}(t) \, dt, $$
where 
$ 0 < u_0^*  \leq \ldots \leq u_n^* < 1. $
\end{thm}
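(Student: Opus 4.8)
The plan is to combine Aggarwal's characterization \eqref{eq.invest} of invariant estimators with the observation that the integrated risk of such an estimator splits into $n+1$ independent one-dimensional Bayesian subproblems, one for each jump height $u_i$.

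First I would compute the risk of a generic invariant estimator $d(\Y;t)=\sum_{i=0}^{n}u_i\,\II(Y_i\le t<Y_{i+1})$. Writing the loss as $\sum_{i=0}^{n}\int_{Y_i}^{Y_{i+1}}\rho\big(\tau(u_i)-\tau(F(t))\big)\,dF(t)$ and changing variables $s=F(t)$ turns the $i$-th term into $\int_{F(Y_i)}^{F(Y_{i+1})}\rho\big(\tau(u_i)-\tau(s)\big)\,ds$. By the probability integral transform the values $F(Y_i)$ are the order statistics of $n$ i.i.d.\ $U(0,1)$ variables, with $F(Y_0)=0$ and $F(Y_{n+1})=1$, so $\PP\big(F(Y_i)\le s<F(Y_{i+1})\big)=\binom{n}{i}s^{i}(1-s)^{n-i}$. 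Since $\rho\ge 0$, Tonelli's theorem gives
\begin{align*}
R_{\rho,\tau}(d,F) &= \sum_{i=0}^{n}\int_0^1\binom{n}{i}s^{i}(1-s)^{n-i}\,\rho\big(\tau(u_i)-\tau(s)\big)\,ds \\
&= \frac{1}{n+1}\sum_{i=0}^{n}\int_0^1\rho\big(\tau(u_i)-\tau(s)\big)\,f_{T_i}(s)\,ds,
\end{align*}
upon using $\binom{n}{i}s^{i}(1-s)^{n-i}=(n+1)^{-1}f_{T_i}(s)$. This already shows that the risk is constant in $F$ and has the announced form.

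Next, each summand $\phi_i(u):=\int_0^1\rho\big(\tau(u)-\tau(t)\big)f_{T_i}(t)\,dt=\EE\big[\rho(\tau(u)-\tau(T_i))\big]$ depends only on $u_i$ and is exactly the posterior expected loss $\EE\big[\rho(\tau(u)-\tau(p))\mid B=i\big]$ of the model in the statement. Thus, whenever a best invariant estimator exists, the Bayes estimates $u_i^*$ (the minimizers of $\phi_i$ over $[0,1]$) exist, and minimizing $R_{\rho,\tau}(d,F)$ over the invariant class reduces to minimizing $\sum_{i=0}^{n}\phi_i(u_i)$ subject to $0\le u_0\le\cdots\le u_n\le 1$. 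Once the vector $(u_0^*,\ldots,u_n^*)$ is seen to be feasible for this constraint, it attains the constrained minimum, since $\sum_i\phi_i(u_i^*)\le\sum_i\phi_i(u_i)$ for every feasible $(u_i)$; hence $d^*$ is a best invariant estimator, and any other best invariant estimator has coefficients minimizing each $\phi_i$, so it coincides with $d^*$ when the Bayes estimates are unique. The strict inequalities $u_0^*>0$ and $u_n^*<1$ follow from a one-sided perturbation: since $T_0$ (resp.\ $T_n$) is supported on all of $(0,1)$, one has $\tau(u)-\tau(t)<0$ for $u$ near $0$ and $t$ in that support, so $\rho$ being strictly decreasing on $(-\infty,0)$ makes $\phi_0$ strictly decreasing on a right-neighbourhood of $0$, and symmetrically $\phi_n$ strictly increasing near $1$.

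The remaining point, which I expect to be the main obstacle because $\rho$ may be non-convex and the $\phi_i$ need not be unimodal, is the feasibility itself, i.e.\ the ordering $u_0^*\le\cdots\le u_n^*$; it cannot be read off from first-order conditions. I would obtain it from the monotone likelihood ratio relation between consecutive $T_i$: the ratio $f_{T_{i+1}}(t)/f_{T_i}(t)$ is proportional to $t/(1-t)$, hence strictly increasing on $(0,1)$. Fix $i$ and reals $v_1<v_2$, and set $\Delta(w)=\rho(v_2-w)-\rho(v_1-w)$; the strict bowl-shape of $\rho$ forces the existence of $w^*$ with $\Delta(w)>0$ for $w<w^*$ and $\Delta(w)<0$ for $w>w^*$, so that $t\mapsto\Delta(\tau(t))$ also changes sign exactly once, from positive to negative, on $(0,1)$ (as $\tau$ is strictly increasing). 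The variation-diminishing property of the increasing likelihood ratio then yields $\EE\big[\Delta(\tau(T_i))\big]\le 0\ \Rightarrow\ \EE\big[\Delta(\tau(T_{i+1}))\big]\le 0$. Taking $v_1=\tau(u_{i+1}^*)$ and $v_2=\tau(u_i^*)$: if $u_{i+1}^*<u_i^*$, then the inequality $\phi_i(u_i^*)\le\phi_i(u_{i+1}^*)$ says exactly that $\EE[\Delta(\tau(T_i))]\le 0$, whence $\EE[\Delta(\tau(T_{i+1}))]\le 0$, i.e.\ $\phi_{i+1}(u_i^*)\le\phi_{i+1}(u_{i+1}^*)$, so $u_i^*$ would be a global minimizer of $\phi_{i+1}$ too, contradicting the uniqueness of $u_{i+1}^*$ (and, if uniqueness were to fail, one would simply re-select the $u_i^*$ as a non-decreasing sequence). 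This establishes the ordering, hence the feasibility, and completes the argument.
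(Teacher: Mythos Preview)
Your proof is correct, and the risk decomposition---writing the loss as a sum over $i$, applying the probability integral transform, and identifying each summand as a Beta posterior expected loss---is exactly the paper's argument (following Ferguson). The differences lie in the two auxiliary facts. For the strict inequalities $u_0^*>0$ and $u_n^*<1$, the paper argues via the first-order condition: setting $B_i(\tau(u))=\int_0^1 \rho'(\tau(u)-\tau(t))\binom{n}{i}t^i(1-t)^{n-i}\,dt$, one has $B_0(\tau(0))<0$ and $B_n(\tau(1))>0$ directly from the sign of $\rho'$, which is slightly cleaner than your perturbation sketch. For the monotonicity $u_0^*\le\cdots\le u_n^*$, the paper simply invokes the complete class theorems of Karlin--Rubin (1956) and Brown--Cohen--Strawderman (1976) for strict MLR families under strict bowl-shaped loss. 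Your hands-on argument---the single-crossing of $w\mapsto\rho(v_2-w)-\rho(v_1-w)$ combined with the MLR of the Beta family---is essentially a direct proof of the relevant special case of those theorems; your key single-crossing claim is indeed valid for any strictly bowl-shaped $\rho$ (on the interval $0<z<h$ where the crossing must occur, $\rho(z)$ is increasing while $\rho(z-h)$ is decreasing, so their difference is strictly monotone). The payoff of your route is self-containment; the paper's route is brevity at the cost of an external reference.
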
 
\begin{proof}
The proof is given in Section  \ref{thm1}.
\end{proof}

\begin{rem}\label{rem1}
A more general representation holds for Theorem \ref{thm.best} in the presence of a weight $H$ as in (\ref{loss}), with  the  $u_i^*$'s defined similarly but with a  prior 
density on $p$ that is proportional to  $H(p)$.  
\end{rem}

\begin{rem}
\label{remark2}
Since $\tau$ is strictly monotone and continuous, a best invariant estimator of $\tau(F)$ under loss $L_{\rho,\tau}(\tau^{-1}(d),F)$ is given by $
d^*_{\tau}(\Y; t) = \sum_{i=0}^n \tau(u^*_i) \, \II(Y_i \leq t < Y_{i+1}),
$ with the $u^*_i$'s given in Theorem \ref{thm.best}.
\end{rem}

For the particular case where $\rho(z)=z^2$ is squared error loss, since Bayes estimators are posterior expectations, the following specialization of Theorem 
\ref{thm.best} becomes immediately available. 
\begin{cor}
 \label{bestsel}
\begin{enumerate}
\item[ (a)]  A best invariant estimator  of $\tau(F)$ under loss $L_{\tau}(\tau^{-1}(d),F)$ (see (\ref{eq.loss})), whenever it exists,   
is given by $d^*_{\tau}(\Y; t)=\sum_{i=0}^n u^*_{i, \tau} \II(Y_i \leq t < Y_{i+1})$, 
where  $u^*_{i, \tau}= \tau(u^*_i) = \EE \big[ \tau(T_i) \big]$ for $i=0, \ldots,n$;
\item[ (b)]
A best invariant estimator of $F$ under loss $L_{\tau}(d,F)$ in (\ref{eq.loss}), whenever it exists, is given by $
d^*(\Y; t) = \sum_{i=0}^n u^*_i \II(Y_i \leq t < Y_{i+1}),
$
where
\begin{equation}
u^*_i = {\tau}^{-1} \big( \EE \big[ \tau(T_i) \big] \big), \qquad \qquad \text{for } i=0, \ldots,n.
\label{eq.uibest}
\end{equation}
\end{enumerate}
In both cases, the risk of the best invariant estimator is constant and given by $\frac{1}{n+1} \sum_{i=0}^n \var\big[\tau(T_i)
\big].$

\end{cor}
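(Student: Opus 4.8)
The plan is to derive both parts directly from Theorem~\ref{thm.best} and Remark~\ref{remark2} by specializing to $\rho(z)=z^2$, so that the only thing requiring genuine verification is the explicit form of the relevant Bayes point estimate.

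First, fix $i\in\{0,\ldots,n\}$. By Theorem~\ref{thm.best}, $u_i^*$ is the Bayes estimate of $p$ for the model $B\mid p\sim\mathrm{Bin}(n,p)$ with $B=i$ observed, prior $p\sim U(0,1)$, and loss $\rho(\tau(d)-\tau(p))$; the posterior is $\mathrm{Beta}(i+1,n-i+1)$, i.e.\ the law of $T_i$. Taking $\rho(z)=z^2$, the posterior risk of an action $d\in[0,1]$ is $\EE\big[(\tau(d)-\tau(T_i))^2\big]$ which, under the substitution $v=\tau(d)$, is a strictly convex quadratic in $v$ over $\tau([0,1])$, minimized at $v=\EE[\tau(T_i)]$. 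Since $\tau$ is continuous and strictly monotone on the compact interval $[0,1]$ and $0<T_i<1$ almost surely with $T_i$ non-degenerate, the value $\EE[\tau(T_i)]$ lies strictly between $\tau(0)$ and $\tau(1)$, hence in the range $\tau([0,1])$; therefore the minimum is attained at $d=u_i^*=\tau^{-1}\big(\EE[\tau(T_i)]\big)\in(0,1)$, which is \eqref{eq.uibest}, proving part~(b). The ordering $u_0^*\le\cdots\le u_n^*$ is inherited from Theorem~\ref{thm.best} (equivalently, it follows from the stochastic ordering of $T_0,\ldots,T_n$ and the monotonicity of $\tau$).

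Part~(a) then follows at once from Remark~\ref{remark2}: a best invariant estimator of $\tau(F)$ under $L_{\tau}(\tau^{-1}(d),F)$ is $d^*_\tau(\Y;t)=\sum_{i=0}^n\tau(u_i^*)\,\II(Y_i\le t<Y_{i+1})$, and here $\tau(u_i^*)=\EE[\tau(T_i)]=u_{i,\tau}^*$. For the constant risk, I would substitute $\rho(z)=z^2$ and $\tau(u_i^*)=\EE[\tau(T_i)]$ into the risk formula of Theorem~\ref{thm.best}, which yields
$$
R(d^*,F)=\frac{1}{n+1}\sum_{i=0}^n\int_0^1\big(\tau(u_i^*)-\tau(t)\big)^2 f_{T_i}(t)\,dt
=\frac{1}{n+1}\sum_{i=0}^n\EE\Big[\big(\EE[\tau(T_i)]-\tau(T_i)\big)^2\Big]
=\frac{1}{n+1}\sum_{i=0}^n\var\big[\tau(T_i)\big];
$$
the same value is obtained for $d^*_\tau$ since $\tau^{-1}(d^*_\tau)=d^*$, so the two losses coincide by construction. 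I expect no real obstacle here: the one point to be careful about is the well-definedness check that $\EE[\tau(T_i)]$ lies in the range of $\tau$ (so that $\tau^{-1}$ may be applied), which is immediate from the continuity and strict monotonicity of $\tau$ on $[0,1]$; everything else is bookkeeping layered on Theorem~\ref{thm.best} and Remark~\ref{remark2}.
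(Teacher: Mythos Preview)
Your proposal is correct and follows essentially the same approach as the paper, which states only that the corollary is an immediate specialization of Theorem~\ref{thm.best} via the fact that Bayes estimates under squared error loss are posterior expectations. You have simply made explicit the steps the paper leaves implicit (the substitution $v=\tau(d)$, the range check ensuring $\tau^{-1}$ may be applied, and the variance computation for the constant risk).
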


\begin{example}
\label{powers}
Corollary \ref{bestsel} applies to powers of $F$ with $\tau(z)=z^m$, $m>0$, and simply brings into play corresponding
moments for Beta distributed $T_i$'s.  For instance with $\rho(z)=z^2$, we obtain in Corollary \ref{bestsel}
\begin{align}\label{general-u}  u_i^*=\{E({T_i}^m\,)\}^{1/m} = \left(\frac{(n+1)! \, \Gamma(i+m+1)}{i! \,\Gamma(n+m+2)}  \right)^{1/m}\,,\, i=0,\ldots,n. 
\end{align} 
When $m=1$,  a best invariant estimator of $F$ under
loss~\eqref{eq.loss} is obtained when
$u_i^* = E(T_i) = (i+1)/(n+2)$,
a result first obtained by Aggarwal~(1955). 
\end{example}

\begin{example} (Odds and log-odds ratios)
\label{odds}
For the situation where $\tau(F)=\frac{F}{1-F}$ and $\rho(z)=|z|$ in (\ref{lossG}), the risk of any invariant procedure is infinite as seen from (\ref{s}) with $i=n$ and the divergence of $\int_0^1 |\tau(u) - \frac{t}{1-t}| \, f_{T_n}(t) \, dF(t)$, for any $\tau(u)$.  The same is true for $\rho$'s that are convex on $(0,\infty)$, such as for $L^p$ integrated losses with $\rho(z)=|z|^p$, $p>1$.   Alternatively, concave $L^p$ choices with $0<p<1$ will lead to the existence of a best invariant estimator as can be verified by the convergence of (\ref{s}) for all $i$, and with $\tau(u)=0$ (for instance).  For estimating $\tau(F)=\log(\frac{F}{1-F})$, the best invariant procedures
will exist in many more cases.  In particular for $\rho(z)=z^2$, the best invariant procedures of Corollary \ref{bestsel} do exist with 
$u_{i,\tau}^*= E\left[ \,\log(\frac{T_i}{1-T_i}) \, \right]\,, $ 
and $u_i^*= {e^{u_{i,\tau}^*}} / {(1+e^{u_{i,\tau}^*})}; \; i=0,\ldots, n\,.$
\end{example}

\begin{rem}
\label{genuine}
When $b < \infty$, all estimators of the form $\sum_{i=0}^{n} u_i\,   
\mathbb{I}(Y_i \leq t < Y_{i+1}) + u_{n+1}\mathbb{I}(t \geq b)$,
with fixed common $u_0, \ldots, u_{n}$ and different $u_{n+1}$ are  
equivalent under loss (2).  Hence, there are many best invariant  
estimators
in the context of Theorem 1, and we can select $u_{n+1}=1$ so that  
best invariant estimates behave like a genuine cdf in the right tail.   
A similar situation
applies when $a > -\infty$.  When $a = -\infty$ and $b = +\infty$, the  
best invariant estimator is unique as given by Theorem 1.

A best invariant estimator of $F$ under loss $L_{\rho,\tau}(d,F)$
is always such that $u_0^*>0$ and $u_n^*<1$ (\textit{cf.} Theorem 1).  
Along with the observations of the previous paragraph, this implies  
that $d^*$ can never be a
genuine distribution function on the real line whenever $a =-\infty$  
or $b=+\infty$. A simple way of overcoming such a  difficulty is to force the invariant
estimator of $F$ in ~\eqref{eq.invest} to take the values $u_0^* =0$ 
and $u_n^*=1$.
Said  otherwise, one may work with the constrained action space
$
\A_c = \{ d(\cdot) : d \text{ is a distribution function on } \mathbb{R} \}.\,
$
Since the minimization is performed for each step $i$, it is immediate 
that the best invariant estimator of $F$ for
such a constrained problem under loss $L_{\rho,\tau}(d,F)$ is given by
$d_c^*(\Y; t) = \sum_{i=1}^{n-1} u^*_i\,  \II(Y_i \leq t < Y_{i+1}) + 
\II(t \geq Y_n)\,,$
where
$u^*_i = \tau^{-1} \big( \EE \big[ \tau(T_i) \big] \big)$,
for $i=1, \ldots,n-1$.
\end{rem}
\begin{example}(Example \ref{odds} continued)  
\label{oddsc}
Revisiting Example \ref{odds} with $\tau(F)=\frac{F}{1-F}$ and $\rho(z)=|z|$, a constrained best invariant estimator
of $F$ will exist, is derived from \eqref{ss}, leading to 
$u^*_i$ being  the median of $T_i \sim \hbox{Beta}(i+1,n-i+1)$, for  $i=1, \ldots,n-1$.  
\end{example}

\subsection{Minimaxity of the best invariant estimator}

We now consider the minimaxity of the best invariant estimator $d^*$ introduced in Theorem~\ref{thm.best} among
all estimators in $\A$.  To this end, we need the following useful lemma which establishes the existence of
an invariant estimator $d_0$ and a cdf $F_0$ under which the  behaviour of  $d_0$ is arbitrarily close to that of a given $d \in A$. 

\begin{lemma} (Yu and  Chow, 1991, Theorem 4) \label{lemma.yuchow}
Suppose that $d=d(\Y; t)$ is a nonrandomized estimator with finite risk and a measurable function of the
order statistics $\Y$. For any $s, \delta>0$ there exists a uniform distribution $P_0$  on a Lebesgue
measurable subset $I\subseteq \mathbb{R}$ and an invariant estimator $d_0 \in \mathcal{I}$
such that 
\begin{equation*}
P_0^{n+1}\{ (\Y, t) : | d(\Y; t) -d_0(\Y; t)| \geq s\} \leq \delta,
\end{equation*}
where $n \geq 2$ corresponds to the sample size.
\end{lemma}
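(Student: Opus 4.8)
The plan is to use a well-chosen $P_0$ to ``flatten'' the arbitrary estimator $d$ into one of the constant-level step functions of the form~\eqref{eq.invest}. Since the group acting here --- the continuous strictly increasing maps of $\RR$ --- is not amenable, there is no Haar probability measure against which to average $d$ in the spirit of Hunt--Stein; the idea is instead to \emph{localise}. One looks for a uniform $P_0$, supported on a set $I$ that is ``small enough'' that, with $P_0^{n+1}$-probability at least $1-\delta$, the configuration $(\Y,t)$ stays in a region on which $d$ depends, up to error $s$, only on the rank of $t$ among $Y_1,\dots,Y_n$ --- and any such function is exactly a member of $\mathcal I$. Here finite risk of $d$ is part of the hypothesis (and is in fact automatic in the present setting, since $d$ and $F$ take values in $[0,1]$ and $\rho,\tau$ are continuous), so that is not a separate concern; and since the estimators of interest lie in $\A$, we may assume $d(\Y;\cdot)$ is nondecreasing, which will simplify the bookkeeping.

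First I would record the probabilistic skeleton. Take $I=\bigcup_{k=1}^{N}I_k$, a union of $N$ pairwise disjoint intervals of common small length $\ell$, with $N$ large; under $P_0=U(I)$ the $n+1$ i.i.d.\ coordinates making up $(\Y,t)$ fall into $n+1$ distinct intervals on an event $E$ with $P_0^{n+1}(E)\ge 1-\binom{n+1}{2}/N$, and conditionally on the occupancy pattern the rank $j$ of $t$ is uniform on $\{0,\dots,n\}$ while the within-interval positions are i.i.d.\ uniform. Next I would bring in measurability of $d$: by Lusin's theorem, for any $\varepsilon>0$ there is a closed set $K$ in configuration space, with Lebesgue complement less than $\varepsilon$, on which $d$ is uniformly continuous. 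Combined with the fact that $d(\Y;Y_j)\le d(\Y;t)\le d(\Y;Y_{j+1})$ whenever $Y_j\le t<Y_{j+1}$, this gives the handle needed to pin down the level assigned to each slot.

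The invariant estimator $d_0\in\mathcal I$ is then built by flattening: its level $u_j$ is a representative value --- say a conditional median under $P_0^{n+1}$ --- of $d(\Y;t)$ on the event that $t$ has rank $j$, after a pooling/monotonisation step to enforce $0\le u_0\le\dots\le u_n\le 1$ (legitimate precisely because $d(\Y;\cdot)$ is itself nondecreasing). Once $I$ --- that is, $\ell$ and the placement of the $I_k$ --- is chosen so that on $E$ the configuration lands in $K$ except on a set of small $P_0^{n+1}$-measure, and so that $d$ varies by less than $s$ as the within-interval positions move, one obtains $|d-d_0|<s$ off a set of $P_0^{n+1}$-measure at most $\delta$, as required.

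The crux --- and the heart of Yu and Chow's Theorem~4 --- is this last requirement on $I$: the $N$ intervals (with $N$ large enough that $E$ is likely) must be positioned so that $d$ is controlled \emph{simultaneously across all $\binom{N}{n+1}$ occupancy patterns}, i.e.\ so that a single level $u_j$ stays within $s$ of $d(\Y;t)$ for most patterns in which $t$ has rank $j$. This is exactly where the non-compactness of the group bites --- there is no group-averaging shortcut --- and one has to trade off $N$, the interval length $\ell$, and the null ``bad'' configuration sets against one another, using the monotone, cdf-like shape of $d(\Y;\cdot)$ to keep things under control. I expect this to be the main obstacle; by comparison, the probabilistic skeleton and the passage from the $u_j$'s to $d_0$ are routine.
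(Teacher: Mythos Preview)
The paper does not prove this lemma; it is quoted as Theorem~4 of Yu and Chow (1991) and invoked as a black box in the proof of Lemma~\ref{thm.eps} (Section~\ref{lem2}). There is therefore no in-paper argument to compare your proposal against.

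On the substance of your sketch: the localisation idea --- take $P_0$ uniform on a union of many small disjoint intervals, so that with high $P_0^{n+1}$-probability the $n+1$ points occupy distinct intervals and only the rank of $t$ matters --- is in the right spirit. But what you have written is an outline rather than a proof, and you acknowledge as much: the step you label ``the crux'' (positioning the $I_k$ so that a \emph{single} vector $(u_0,\dots,u_n)$ is within $s$ of $d(\Y;t)$ across most of the $\binom{N}{n+1}$ occupancy patterns simultaneously) is essentially the entire content of the result, and you offer no mechanism for carrying it out. Lusin's theorem gives you uniform continuity on a large set, but that only controls the variation of $d$ as points move \emph{within} their intervals; it does nothing to ensure that $d$ takes approximately the same value on two configurations with the same rank pattern but different interval occupancies, which is what you need. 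Two further points: (i) the lemma as stated does not assume $d\in\A$, so building the argument on monotonicity of $d(\Y;\cdot)$ proves a weaker statement than the one quoted; (ii) defining $u_j$ as a conditional median under $P_0^{n+1}$ guarantees only that $|d-u_j|$ is small on half of the rank-$j$ configurations, not on a $(1-\delta)$-fraction --- concentration of the conditional law of $d$ is exactly what has to be \emph{engineered} through the choice of $I$, so the median device is not doing real work here. If your intent is merely to cite Yu and Chow, as the paper does, that is appropriate; if you intend an independent proof, the combinatorial heart of the construction is still missing.
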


The following result extends Theorem~2.2 of Yu~(1992) and Theorem~1 of
St\k{e}pie\'n-Baran (2010) to the class of losses $L_{\rho,\tau}(d,F)$, when $\tau$ is H\"{o}lder continuous of
order $\alpha \in (0,1]$, that is, there exists constants $\alpha,M>0$ such that
\begin{equation*}
\big| \tau(t_1)-\tau(t_2)\big| \leq M \, \big| t_1-t_2 \big|^{\alpha},
\end{equation*}
for all $t_1, t_2 \in [0,1]$. We write $\tau \in \LL(\alpha)$ to denote this.  Note that, under the H\"{o}lder continuity
assumption for $\tau$ and the boundedness of $\rho$ on any finite interval, the risk of any invariant estimator 
is finite (hence a best invariant estimator will exist) as seen by Theorem 1's representation (\ref{s}).

\begin{lemma}\label{thm.eps}
Consider estimating $F$ under loss (\ref{lossG}) with $\rho$ differentiable, strict-bowl shaped, $\rho'$ bounded, and
$\tau \in \LL(\alpha)$ for $\alpha \in (0,1]$. Then, for any $d \in \A$ and $\epsilon >0$, there exists
$F_0 \in \FF$ and $d_0 \in \mathcal{I} $ such that $| R(d,F_0) - R(d_0,F_0)| \leq \epsilon \,. $
\end{lemma}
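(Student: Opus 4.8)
The plan is to transfer the approximation guarantee of Lemma \ref{lemma.yuchow} (Yu and Chow) from a statement about the sup-distance between $d$ and $d_0$ on an event of high $P_0^{n+1}$-probability to a statement about the difference of the two risks under the loss $L_{\rho,\tau}$. First I would fix $d \in \A$ with finite risk (finiteness being guaranteed by the H\"older assumption on $\tau$ together with the local boundedness of $\rho$, as noted just before the lemma), fix $\epsilon > 0$, and apply Lemma \ref{lemma.yuchow} with parameters $s,\delta>0$ to be chosen at the end. This produces a uniform distribution $P_0$ on a measurable set $I \subseteq \RR$, with associated cdf $F_0 \in \FF$, and an invariant estimator $d_0 \in \mathcal{I}$, such that $d$ and $d_0$ differ by less than $s$ in absolute value off an event of $P_0^{n+1}$-probability at most $\delta$.

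Next I would write the difference of risks as a single expectation. Since $R(d,F_0) - R(d_0,F_0) = \EE_{F_0}\!\left[ \int_{\RR} \big\{ \rho(\tau(d(\Y;t)) - \tau(F_0(t))) - \rho(\tau(d_0(\Y;t)) - \tau(F_0(t))) \big\}\, dF_0(t) \right]$, and the inner integral against $dF_0(t)$ together with the outer $\EE_{F_0}$ is exactly integration of the integrand, viewed as a function of $(\Y,t)$, against $P_0^{n+1}$ (the $\Y$ being the order statistics of an i.i.d.\ $F_0$-sample and $t$ an independent $F_0$-draw), the whole quantity is $\big| \EE_{P_0^{n+1}}[\Delta(\Y,t)] \big|$ where $\Delta = \rho(\tau(d) - \tau(F_0)) - \rho(\tau(d_0) - \tau(F_0))$. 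I would then split the $P_0^{n+1}$-expectation over the good event $G = \{|d - d_0| < s\}$ and its complement. On $G$: using that $\tau \in \LL(\alpha)$ gives $|\tau(d) - \tau(d_0)| \le M s^\alpha$, and that $\rho$ has bounded derivative, say $|\rho'| \le K$ a.e., the mean value theorem yields $|\Delta| \le K M s^\alpha$ pointwise on $G$, so this part contributes at most $KMs^\alpha$. On $G^c$: both $\tau(d) - \tau(F_0)$ and $\tau(d_0) - \tau(F_0)$ lie in a fixed bounded interval (namely $[\tau(0)-\tau(1),\tau(1)-\tau(0)]$) since $d, d_0, F_0$ all take values in $[0,1]$ and $\tau$ is continuous on $[0,1]$; hence $\rho$ of these arguments is bounded by some constant $C_0 = \sup_{|z| \le \tau(1)-\tau(0)} |\rho(z)|$, so $|\Delta| \le 2C_0$ there, and the contribution is at most $2C_0 \, P_0^{n+1}(G^c) \le 2C_0 \delta$. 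Combining, $|R(d,F_0) - R(d_0,F_0)| \le KMs^\alpha + 2C_0\delta$.

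Finally I would choose $s$ small enough that $KMs^\alpha \le \epsilon/2$ and then $\delta$ small enough that $2C_0\delta \le \epsilon/2$, and invoke Lemma \ref{lemma.yuchow} with these values to obtain the desired $F_0$ and $d_0$. The main obstacle is the bookkeeping in the second paragraph: one must justify carefully that the double integral (outer $\EE_{F_0}$ over the sample, inner $dF_0(t)$) is genuinely an expectation under the product measure $P_0^{n+1}$ on $(\Y,t)$ so that Lemma \ref{lemma.yuchow}'s probability bound applies directly — this is where the ``$n+1$'' in the exponent comes from — and that all the constants ($K$, $M$, $C_0$) are finite and independent of the data, which is exactly what the hypotheses ($\rho'$ bounded, $\tau \in \LL(\alpha)$, hence $\tau$ bounded on $[0,1]$) are there to supply. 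A minor additional point is handling a possibly randomized $d$; but by conditioning on the randomization (or restricting to nonrandomized $d$, which suffices for the minimax application in Theorem \ref{thm.minimax}) this is routine.
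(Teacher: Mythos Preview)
Your proposal is correct and follows essentially the same route as the paper's proof: invoke the Yu--Chow lemma, view the risk difference as a $P_0^{n+1}$-expectation, and control it by splitting over $\{|d-d_0|<s\}$ and its complement using H\"older continuity of $\tau$ together with boundedness of $\rho'$. The only organizational difference is that the paper first applies the mean-value and H\"older bounds \emph{globally} to obtain $|\Delta|\le M(\sup|\rho'|)\,|d-d_0|^\alpha$ and then uses Jensen's inequality for the concave map $z\mapsto z^\alpha$ before splitting, whereas you split first and bound $|\Delta|$ crudely by $2C_0$ on the bad set; your variant thus avoids Jensen at the price of introducing the extra constant $C_0=\sup_{|z|\le \tau(1)-\tau(0)}|\rho(z)|$.
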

\begin{proof}
The proof is given in Section \ref{lem2}
\end{proof}

What follows is our main minimaxity  result.

\begin{thm}\label{thm.minimax}
For the problem of estimating $F$ under loss (\ref{lossG}) with $\rho$ differentiable, strict-bowl shaped, $\rho'$ bounded, and $\tau \in \LL(\alpha)$ for $\alpha \in (0,1]$, the best invariant estimator $d^*$ is minimax,  that is  $$\inf_{d \in \A} \sup_{F \in \FF} R_{\tau}(d,F) = \sup_{F \in \FF} R_{\tau}(d^*,F).$$
\end{thm}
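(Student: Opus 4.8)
The plan is to combine the two lemmas from the excerpt --- Yu and Chow's approximation result (Lemma~\ref{lemma.yuchow}) and its risk-theoretic consequence (Lemma~\ref{thm.eps}) --- with the constant-risk property of $d^*$ established in Theorem~\ref{thm.best}. The standard route to minimaxity of a best invariant estimator is to show that no estimator can do uniformly better, exploiting the fact that an equivariant estimator has constant risk equal to the minimax value whenever the problem is invariant and the group acts ``transitively enough'' on the parameter space. Here the group is that of continuous strictly increasing transformations, the orbit of any $F \in \FF$ under this group is all of $\FF$, and $d^*$ has constant risk $r^* := R_{\rho,\tau}(d^*,F)$ by Theorem~\ref{thm.best}. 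So the whole content is in ruling out the existence of a $d \in \A$ with $\sup_{F} R_{\rho,\tau}(d,F) < r^*$.

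First I would argue by contradiction: suppose $d \in \A$ satisfies $\sup_{F \in \FF} R_{\rho,\tau}(d,F) = r^* - 3\epsilon$ for some $\epsilon > 0$; in particular the risk of $d$ is finite, so (passing to a nonrandomized version if needed, since $\rho$ is convex-like and the loss is integrated) Lemma~\ref{thm.eps} applies. That lemma produces, for this $\epsilon$, a distribution $F_0 \in \FF$ and an invariant estimator $d_0 \in \mathcal{I}$ with $|R(d,F_0) - R(d_0,F_0)| \le \epsilon$. Then
\[
R_{\rho,\tau}(d_0,F_0) \le R_{\rho,\tau}(d,F_0) + \epsilon \le \sup_{F} R_{\rho,\tau}(d,F) + \epsilon = r^* - 2\epsilon.
\]
But every invariant estimator has constant risk (by Aggarwal's reduction to the form \eqref{eq.invest} the risk is computed as in Theorem~\ref{thm.best}'s representation and does not depend on $F$), and $d^*$ is the \emph{best} invariant estimator, so $R_{\rho,\tau}(d_0,F_0) \ge r^* = R_{\rho,\tau}(d^*, F_0)$. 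This gives $r^* \le r^* - 2\epsilon$, a contradiction. Hence $\sup_{F} R_{\rho,\tau}(d,F) \ge r^*$ for every $d \in \A$, and since $d^*$ attains $r^*$ we get $\inf_{d \in \A}\sup_{F}R_{\rho,\tau}(d,F) = r^* = \sup_{F} R_{\rho,\tau}(d^*,F)$, which is the claim. (I note the theorem statement writes $R_\tau$; I read this as $R_{\rho,\tau}$ throughout.)

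A few points need care. One should first check that attention can be restricted to nonrandomized estimators that are measurable functions of the order statistics: the loss is a convex functional of $d(t)$ for each $t$ when $\rho$ is convex, but here $\rho$ is only strict bowl-shaped, not necessarily convex --- so the usual Rao--Blackwell/Jensen averaging argument over the orbit does not immediately kill randomization. However, Lemma~\ref{lemma.yuchow} is stated for nonrandomized $d$; the resolution is that any randomized rule can be handled by conditioning, or more simply one invokes the reduction already implicit in Lemma~\ref{thm.eps}, whose hypotheses ask only for a $d \in \A$. The genuinely delicate step --- and the one I expect to be the main obstacle --- is Lemma~\ref{thm.eps} itself, i.e. transferring the pointwise $L^\infty$-closeness of $d$ and $d_0$ under $P_0$ (from Lemma~\ref{lemma.yuchow}) into closeness of the integrated risks. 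This is exactly where the hypotheses $\rho' $ bounded and $\tau \in \mathcal{L}(\alpha)$ enter: boundedness of $\rho'$ gives a Lipschitz bound $|\rho(\tau(d)-\tau(F)) - \rho(\tau(d_0)-\tau(F))| \le \|\rho'\|_\infty |\tau(d)-\tau(d_0)| \le \|\rho'\|_\infty M\, |d - d_0|^\alpha$, after which one splits the integral over $\{|d-d_0| \ge s\}$ (small probability, controlled by $\delta$, with a uniform bound on $\rho \circ (\tau\text{-differences})$ since both arguments lie in a bounded set) and its complement ($|d-d_0|^\alpha \le s^\alpha$, small). But that is the content of Lemma~\ref{thm.eps}, which the theorem is entitled to assume; given it, the proof of Theorem~\ref{thm.minimax} is the short contradiction argument above.
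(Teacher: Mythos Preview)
Your proposal is correct and follows essentially the same route as the paper: invoke Lemma~\ref{thm.eps} to find $F_0$ and an invariant $d_0$ with $|R(d,F_0)-R(d_0,F_0)|\le\epsilon$, then use the constant-risk and best-invariant properties of $d^*$ to conclude. The paper phrases it as a direct chain of inequalities $\sup_F R(d^*,F)=R(d^*,F_0)\le R(d_0,F_0)\le R(d,F_0)+\epsilon\le \sup_F R(d,F)+\epsilon$ rather than by contradiction, but the logical content is identical.
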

\begin{proof}
The proof is given in Section \ref{thm2}
\end{proof}

\begin{example}
\label{minimaxapplications}
As a continuation of Examples \ref{powers} and \ref{odds}, we summarize how the results of this section apply or don't apply.
For log-odds ratios,  although a best invariant estimator exists, the above minimaxity result does not apply since the
function $\tau(z)=\log(\frac{z}{1-z})$ does not satisfy the H\"{o}lder continuity assumption.  For powers $\tau(z)=z^m$ with $m>0$,
we have H{\"o}lder continuity for $0<\alpha \leq m$ and the corresponding best invariant estimators of $F$ are minimax as
long as $\rho$ satisfies the given conditions (examples include $L^p$ with $\rho(z)=z^p;\, p \geq 1$; Linex with
$\rho(z)=e^{az}-az-1$, $a \neq 0$, among others).   Equivalently,  Remark \ref{remark2}'s best invariant estimator of
$\tau(F)=F^m$, under loss $L_{\tau}(\tau^{-1}(d),F)$, is also minimax by virtue of Theorem \ref{thm.minimax}.    
\end{example}

We conclude this section by expanding upon best invariant estimators and their minimaxity, for a more general class of weighted integrated loss functions given by
\begin{equation}
\label{losswgtau}
 L_{w_n, \rho,\tau}(d, F)= \int_{\mathbb{R}} w_n(t) \; \rho\big(\tau(d(t))- \tau(F(t)\big)\, dF(t)\,,
\end{equation} 
where the conditions on $\rho$ and $\tau$ are as above, and where $w_n(\cdot)$ is an invariant weight function, i.e. such that $w_n(t)= w_i$ when $t\in[Y_i, Y_{i+1})$, $i=0, \ldots, n$, with constants
$0<w_i \leq 1$.   In fact, the procedure 
obtained in  Theorem \ref{thm.best} is also the best invariant and minimax estimator of $F$ for such loss functions.  This is a key result that will prove to be quite useful for the integrated balanced loss functions developments of Section 3 below. 

\begin{thm}\label{minimax-weighted} 
The estimator $d^*(\Y; t) = \sum_{i=0}^n u^*_i \II(Y_i \leq t < Y_{i+1})$
given in Theorem \ref{thm.best} is best invariant and minimax for loss  $L_{w_n, \rho,\tau}(d, F)$ as in (\ref{losswgtau}).
\end{thm}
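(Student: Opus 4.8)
The plan is to mimic the structure already established for the unweighted loss $L_{\rho,\tau}$, handling the best-invariant and the minimax claims separately. For the best-invariant part, I would first note that Aggarwal's reduction still applies: under the group of continuous strictly increasing transformations, the weight $w_n(\cdot)$ is itself invariant by assumption (it depends on $t$ only through which interval $[Y_i,Y_{i+1})$ contains $t$, and the $Y_i$ transform equivariantly), so the class of invariant estimators is unchanged and consists of step functions of the form \eqref{eq.invest}. Plugging such an estimator into $L_{w_n,\rho,\tau}$ and taking expectation, the risk decomposes exactly as in the proof of Theorem \ref{thm.best}, but with each term $i$ now carrying the extra constant factor $w_i$:
$$
R_{w_n,\rho,\tau}(d,F)=\frac{1}{n+1}\sum_{i=0}^{n} w_i \int_0^1 \rho\big(\tau(u_i)-\tau(t)\big)\, f_{T_i}(t)\,dt.
$$
Since the minimization over $0\le u_0\le\cdots\le u_n\le 1$ proceeds coordinatewise and each $w_i>0$ is a fixed positive constant, the minimizing $u_i$ is exactly the same $u_i^*$ as in Theorem \ref{thm.best} — multiplying a convex-type objective by a positive constant does not move its minimizer, and the ordering constraint is automatically satisfied because the unconstrained minimizers $u_i^*$ are already nondecreasing (as established in Theorem \ref{thm.best}). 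Hence $d^*$ is best invariant for $L_{w_n,\rho,\tau}$ as well.

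For the minimax part, I would re-run the argument of Lemma \ref{thm.eps} and Theorem \ref{thm.minimax} with the weight present. The key point is that the weight $w_n$ is bounded: $0<w_i\le 1$, so $0<w_n(t)\le 1$ pointwise. Given $d\in\A$ and $\epsilon>0$, Lemma \ref{lemma.yuchow} (Yu and Chow) produces a uniform distribution $P_0$ on a measurable set $I$ and an invariant $d_0$ making $d$ and $d_0$ close in $P_0^{n+1}$-probability. In the risk-difference estimate from Lemma \ref{thm.eps}, one bounds $|R(d,F_0)-R(d_0,F_0)|$ by an integral of $|\rho(\tau(d)-\tau(F_0))-\rho(\tau(d_0)-\tau(F_0))|$ against $w_n\,dF_0$; using $\rho'$ bounded and $\tau\in\LL(\alpha)$ exactly as before, the integrand is controlled by a constant times $|d-d_0|^{\alpha}$ on the event where $d,d_0$ are close, and by a uniform bound $\sup\rho$ on $[0,1]$-type arguments on the small-probability complement. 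Since $w_n(t)\le 1$, inserting the weight only shrinks the relevant integrals, so the same chain of inequalities delivers $|R_{w_n}(d,F_0)-R_{w_n}(d_0,F_0)|\le\epsilon$; i.e. the weighted analogue of Lemma \ref{thm.eps} holds verbatim. Then the standard minimax-from-invariance argument of Theorem \ref{thm.minimax} goes through: for any $d\in\A$, $\sup_F R_{w_n}(d,F)\ge R_{w_n}(d_0,F_0)-\epsilon$, and since $d_0$ is invariant its constant risk is at least that of the best invariant $d^*$ (whose weighted risk is a fixed constant $r^*=\frac{1}{n+1}\sum_i w_i\int_0^1\rho(\tau(u_i^*)-\tau(t))f_{T_i}(t)\,dt$), so $\sup_F R_{w_n}(d,F)\ge r^*-\epsilon$; letting $\epsilon\downarrow 0$ gives $\inf_{d}\sup_F R_{w_n}(d,F)\ge r^*=\sup_F R_{w_n}(d^*,F)$, which is the minimax identity.

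The only points requiring care — and the place I expect the main obstacle — are (i) verifying that the finiteness-of-risk hypothesis needed to invoke Lemma \ref{lemma.yuchow} is preserved, which follows since $w_n\le 1$ makes $R_{w_n}(d,F)\le R_{\rho,\tau}(d,F)$ and the latter is finite under the $\LL(\alpha)$-plus-bounded-$\rho$ assumptions noted in the paragraph before Lemma \ref{thm.eps}; and (ii) making sure the lower-bounding step "$d_0$ invariant $\Rightarrow$ its constant risk $\ge r^*$" still uses only that $d^*$ minimizes the invariant risk, which is precisely the best-invariant conclusion just proved. Neither step introduces genuinely new difficulty; the argument is essentially a weighted bookkeeping version of the $\epsilon$-approximation and invariance-reduction already carried out, with the uniform bound $w_i\le 1$ ensuring all estimates survive the insertion of the weight. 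I would therefore present the proof as: (1) restate Aggarwal's reduction with invariant $w_n$; (2) decompose the weighted invariant risk and read off $u_i^*$; (3) observe $w_n\le 1$ and re-derive the weighted Lemma \ref{thm.eps}; (4) conclude minimaxity by the argument of Theorem \ref{thm.minimax}.
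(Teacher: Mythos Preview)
Your proposal is correct and follows essentially the same approach as the paper: the paper also observes that the invariant-risk decomposition picks up the constant factors $w_i$ without affecting the coordinatewise minimizers, and that for minimaxity the developments of Section~2.2 carry over by bounding $w_n(\cdot)\le 1$. Your write-up is more detailed (in particular, you spell out the finiteness-of-risk check and the constant-risk comparison), but the strategy is identical.
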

\begin{proof}
The proof is given in Section \ref{thm3}.
\end{proof}

%
\section{Integrated balanced loss functions}
%

We now introduce and advocate the use of integrated balanced loss functions of the form
\begin{equation}
\label{iblf} L_{\omega,d_0}(d,F)= \int_{\RR} \{w(x,t)
(d(t)-d_0(t))^2 + (1-w(x,t)) (d(t)-\tau(F(t))^2 \} 
\, dF(t)\,,
\end{equation}
where $d_0(t)$ is a target estimate of $\tau(F(t))$, such as $\tau(F_n(t))$ with $F_n$ the empirical cdf, and $w(\cdot,\cdot) \in (0,1]$ is a possibly data
dependent weight function.  In
the spirit of Jafari Jozani, Marchand and Parsian (2006), this
integrated balanced loss function allows one to combine the  desire of closeness of an estimator  $d(X,\cdot)$ to both: {\bf  (i)}  the target estimator $d_0(X,\cdot)$ and {\bf (ii)} the unknown function $\tau(F(\cdot))$.
We provide below analysis for integrated balanced loss functions as in
(\ref{iblf}), which is unified with respect to the choices of $w$,
$d_0$, and $\tau$.  For ease of notation, we hereafter write $w$ instead of
$w(\cdot,\cdot)$ or $w(X,\cdot)$,  unless emphasis is required.
Although, we do proceed with developments for the general
situation, we will focus on particular cases where $d_0$ and $w$
are invariant (with respect to monotone transformations of the
data points) and hence expressible as $d_0(y,\cdot)$ and
$w(y, \cdot)$ without loss of generality.   For invariant $d_0$ and $w$, we derive
the best invariant procedure and show that it is minimax for $\tau \in \LL(\alpha)$, thus
extending the ``unbalanced'' loss (denoted $L_0$) result of Theorem \ref{minimax-weighted}
to an integrated balanced loss minimax result (Theorem \ref{blfminimax}). An
interesting feature will arise : if we choose $d_0$ and $w$ as
invariant, $d_0$ as a genuine cdf, and $w(y,t)=1$
whenever $F_n(y,t) \in \{0,1\}$, the best invariant procedure
$d_{w}^*(y,t)$ will coincide with $d_0(y,t)$ for $t \notin
[y_{1},y_{n}]$, and will therefore possess the potential advantage of
being a genuine cdf.

One can exploit Ferguson's decomposition to derive the best invariant
 estimator $d_{w}^*(Y,\cdot)$ for integrated balanced loss $L_{w,d_0}(d,F)$,
or for its associated risk
\begin{equation}
\label{blfrisk} R_{w,d_0}(d(X,\cdot),F)=E_F
[L_{\omega,d_0}(d(X,\cdot),F)], \end{equation} but we proceed
alternatively with a useful and general representation (Lemma
\ref{99}) of the risk $R_{w,d_0}$ in terms of weighted
unbalanced versions $R_H$, which will  be critical for
establishing the minimaxity of $d_{w}^*(Y,\cdot)$ (for invariant
$d_0$ and $w$), and also lead to further implications with regards
to admissibility and dominance.  Below, we represent estimators
$d(X,\cdot) \in \cal{A}$ as $d(x,t)=d_0(x,t) + (1-w(x,t)) g(x,t)$,
$x \in \mathbb{R}^n, t \in \mathbb{R}$.  The following now relates the risk
performance of such an estimator $d=d_0+(1-w)g$ of $\tau(F)$ under
loss $L_{\omega,d_0}$ to the performance of $d_0 + g$
under risk $R_H$ relative to an integrated weighted
squared error loss.\footnote{For convenience, we have dropped the subscript $\tau$ under $d$.}

\begin{lemma}
\label{99}
We have for $d_0(X,\cdot) \in \mathcal{A},  F \in \mathcal{F}$,
\begin{equation}
\label{blfrep} R_{w,d_0}(d_0+(1-w)g, F)
= R_{H_1}(d_0,F) + R_{H_2}(d_0 + g,F)\,,
\end{equation}
where $R_{H_1}$ and $R_{H_2}$ are risks associated to the losses
  $\int_{\mathbb{R}} \, H_i(w(x,t)) \, (d(t)-\tau(F(t)))^2 \,\, dF(t)$, $i=1,2$,
with $H_1(z)=z(1-z)$ and $H_2(z)=(1-z)^2$.
\end{lemma}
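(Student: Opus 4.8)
The plan is to establish the decomposition \eqref{blfrep} by direct algebraic expansion of the integrand of $L_{\omega,d_0}$, followed by taking expectations over $F$. First I would substitute $d = d_0 + (1-w)g$ into the integrand of \eqref{iblf}. The first term becomes $w\,(d-d_0)^2 = w\,(1-w)^2 g^2$. For the second term, I would write $d - \tau(F) = (d_0 - \tau(F)) + (1-w)g$, so that
\[
(1-w)(d-\tau(F))^2 = (1-w)\bigl[(d_0-\tau(F))^2 + 2(1-w)g\,(d_0-\tau(F)) + (1-w)^2 g^2\bigr].
\]
Adding the two contributions, the integrand of $L_{\omega,d_0}(d_0+(1-w)g,F)$ equals
\[
(1-w)(d_0-\tau(F))^2 + 2(1-w)^2 g\,(d_0-\tau(F)) + \bigl[w(1-w)^2 + (1-w)^3\bigr]g^2.
\]
Since $w(1-w)^2 + (1-w)^3 = (1-w)^2$, the last coefficient simplifies to $(1-w)^2$, and the three terms recombine as
\[
(1-w)\,(d_0-\tau(F))^2 \;-\; (1-w)^2(d_0-\tau(F))^2 \;+\; (1-w)^2\bigl[(d_0-\tau(F)) + g\bigr]^2,
\]
where I have completed the square in the last two displayed terms: $(1-w)^2(d_0-\tau(F))^2 + 2(1-w)^2 g(d_0-\tau(F)) + (1-w)^2 g^2 = (1-w)^2(d_0+g-\tau(F))^2$. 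The first two terms combine to $\bigl[(1-w)-(1-w)^2\bigr](d_0-\tau(F))^2 = w(1-w)(d_0-\tau(F))^2$, which is exactly the $H_1$-weighted squared error integrand with $H_1(w) = w(1-w)$, while the third term is the $H_2$-weighted squared error integrand for the estimator $d_0+g$ with $H_2(w)=(1-w)^2$.

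Having verified the pointwise-in-$t$ identity of integrands, I would then integrate against $dF(t)$ over $\mathbb{R}$ and take expectation $E_F$, using linearity and Tonelli's theorem to interchange, obtaining
\[
R_{w,d_0}(d_0+(1-w)g,F) = R_{H_1}(d_0,F) + R_{H_2}(d_0+g,F),
\]
which is precisely \eqref{blfrep}. Note that $R_{H_1}(d_0,F)$ depends only on $d_0$ (not on $g$), as claimed, since its integrand $w(1-w)(d_0-\tau(F))^2$ contains no $g$.

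I do not anticipate a serious obstacle here; the result is essentially a completion-of-square bookkeeping exercise. The one point requiring a word of care is the interchange of $E_F$ with the integral $\int_{\mathbb{R}} \cdots \, dF(t)$: this is justified because all integrands are nonnegative (so Tonelli applies unconditionally), and the decomposition is an identity of nonnegative quantities whether or not the individual risks are finite. I would also remark in passing that the same expansion, specialized to $w\equiv 0$, recovers $R_{w,d_0}=R_{H_2}(d_0+g,F)=R_0(d_0+g,F)$, the ``unbalanced'' integrated $L^2$ risk, which foreshadows the duality exploited in Theorems \ref{blfbestinvariant}, \ref{blfminimax}, and \ref{dominance}.
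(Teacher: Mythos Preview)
Your proposal is correct and follows essentially the same route as the paper: both establish the pointwise algebraic identity
\[
w(d-d_0)^2 + (1-w)(d-\tau(F))^2 \;=\; w(1-w)(d_0-\tau(F))^2 + (1-w)^2(d_0+g-\tau(F))^2
\]
for $d=d_0+(1-w)g$ (the paper simply asserts this as ``easy to check'' in an abstract parametric setting before specializing, whereas you spell out the completion-of-square details), then integrate in $t$ and take $E_F$. Your explicit remark on Tonelli is a nice addition that the paper omits.
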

\begin{proof}
The proof is given in Section \ref{lem3}.
\end{proof}

\noindent  Now, by virtue of representation (\ref{blfrep}) where
the risk under integrated balanced loss of an estimator is
expressed in terms of the unbalanced risk $R_{H_2}$ of another
estimator, we obtain the following implications.

\begin{thm}
\label{blfbestinvariant} For invariant $d_0$ and $w(>0)$, the best
invariant estimator of $\tau(F)$, as long as it exists, under loss (\ref{iblf}) is
(uniquely) given by:
$$ d_{w}^*(y,t) = w(y,t) \, d_0(y,t) \, + \, (1-w(y,t))\, d_0^*(y,t)\,,$$
where $d_0^*$ is the best invariant estimator of $\tau(F)$ under
unbalanced loss $$L_0(d,F)= \int_{\RR} \, (d(t)-\tau(F))^2 \,
dF(t),$$ given in Corollary 1. 
\end{thm}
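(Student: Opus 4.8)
The plan is to derive Theorem \ref{blfbestinvariant} from the risk identity \eqref{blfrep} of Lemma \ref{99}, which turns the minimization of the integrated balanced risk over invariant procedures into a minimization already carried out in Corollary \ref{bestsel} (and Theorem \ref{minimax-weighted}). First I recall the form of the competitors: by \eqref{eq.invest}, an invariant estimator of $\tau(F)$ is a step function $\sum_{i=0}^{n}u_i\,\II(Y_i\le t<Y_{i+1})$ with constant jumps; moreover, equivariance under the group of continuous strictly increasing transformations, together with the standing hypothesis that $w$ and $d_0$ depend on the sample only through its order statistics, forces the values of $w$ and $d_0$ on each interval $[Y_i,Y_{i+1})$, say $w_i$ and $(d_0)_i$, to be genuine constants as well (the orbit of any strictly ordered $n$-tuple under the group is the set of all strictly ordered $n$-tuples). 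I parametrize an arbitrary invariant competitor by $d=d_0+(1-w)g$, as in the passage preceding Lemma \ref{99}; then $d$ is invariant exactly when $g$ is, and as $g$ varies over invariant estimators, $d_0+g$ varies over all invariant estimators of $\tau(F)$ (the action-space constraints on the $u_i$'s being absorbed into the phrase ``whenever it exists'').

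Now apply \eqref{blfrep}: for invariant $d_0$ and $w$, $R_{w,d_0}(d_0+(1-w)g,F)=R_{H_1}(d_0,F)+R_{H_2}(d_0+g,F)$, and the first summand does not involve $g$. Hence a best invariant estimator for the integrated balanced loss \eqref{iblf} is obtained by minimizing $R_{H_2}(d_0+g,F)$ over invariant estimators $d_0+g$ of $\tau(F)$. But $R_{H_2}$ is exactly the risk of $d_0+g$ as an estimator of $\tau(F)$ under the weighted integrated squared-error loss $\int_{\RR}\bigl(1-w(x,t)\bigr)^2\bigl(d(t)-\tau(F(t))\bigr)^2\,dF(t)$, a loss of the form \eqref{losswgtau} with $\rho(z)=z^2$ and invariant weight $(1-w_i)^2$ on $[Y_i,Y_{i+1})$. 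By Theorem \ref{minimax-weighted} --- equivalently, by Corollary \ref{bestsel}(a) via the $\tau$-equivalence of Remark \ref{remark2}, since under squared-error loss a per-interval weight leaves the minimizing posterior mean unchanged --- the best invariant estimator of $\tau(F)$ under such a weighted squared-error loss coincides with the one under the unweighted loss $L_0$, namely $d_0^{*}$ of Corollary \ref{bestsel}(a) with jumps $(d_0^{*})_i=\EE[\tau(T_i)]$. Consequently the minimizing $d_0+g$ is $d_0^{*}$, so $g^{*}=d_0^{*}-d_0$ and
\[
d_{w}^{*}=d_0+(1-w)g^{*}=d_0+(1-w)\bigl(d_0^{*}-d_0\bigr)=w\,d_0+(1-w)\,d_0^{*},
\]
which is the claimed expression. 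On an interval with $w_i=1$ the weight $(1-w_i)^2$ vanishes, so $R_{H_2}$ does not pin down the corresponding step of $g$; but there the balanced loss reduces to $(d(t)-d_0(t))^2$ and the displayed formula already forces the step value $(d_0)_i$, so the two descriptions agree everywhere and $d_{w}^{*}$ is uniquely determined. Off such intervals, uniqueness is inherited from the uniqueness of the invariant minimizer of $R_{H_2}$ (strict convexity of the squared-error objective, Corollary \ref{bestsel}).

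As a self-contained alternative that also makes the uniqueness transparent, one may compute the integrated balanced risk of an invariant estimator with jumps $u_0\le\cdots\le u_n$ directly: using the probability-integral transform and the Beta$(i+1,n-i+1)$ spacings identity underlying Theorem \ref{thm.best}, it equals $\tfrac{1}{n+1}\sum_{i=0}^{n}\bigl\{w_i(u_i-(d_0)_i)^2+(1-w_i)\,\EE[(u_i-\tau(T_i))^2]\bigr\}$, which is constant in $F$; each summand is a strictly convex quadratic in $u_i$ with leading coefficient $1/(n+1)$ and hence has the unique minimizer $u_i^{*}=w_i(d_0)_i+(1-w_i)\EE[\tau(T_i)]$. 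Because this minimization decouples across $i$, the monotonicity constraint $u_0\le\cdots\le u_n$ is inactive precisely when the resulting $(u_i^{*})$ are ordered, which is what ``whenever it exists'' covers (cf. Example \ref{odds} for cases where some $\EE[\tau(T_i)]$ is infinite and no best invariant estimator exists).

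I expect the step requiring the most care to be the bookkeeping around the reparametrization $d\leftrightarrow g$ --- verifying that it respects the action space $\A$ (monotonicity of the jumps and their membership in the range of $\tau$), and handling the degenerate intervals where $w_i=1$ so that the reductions via Lemma \ref{99} and via the direct computation coincide there. Everything else is an immediate consequence of \eqref{blfrep} and Corollary \ref{bestsel}.
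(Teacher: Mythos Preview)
Your proof is correct and follows essentially the same route as the paper's: apply the decomposition \eqref{blfrep} of Lemma \ref{99}, observe that $R_{H_1}(d_0,F)$ is independent of $g$, and invoke Theorem \ref{minimax-weighted} to identify the minimizer of $R_{H_2}$ as $d_0^{*}$, giving $d_w^{*}=d_0+(1-w)(d_0^{*}-d_0)$. Your explicit handling of the degenerate intervals with $w_i=1$ and the alternative direct per-step computation add rigor beyond what the paper's brief argument supplies, but the core strategy is identical.
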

\begin{proof}
The proof is given in Section \ref{thm3}.
\end{proof}

We thus obtain an appealing representation, for invariant $d_0$ and $w$, of the optimal invariant estimate
$d_w^*(y,t)$ as a convex linear combination of the target estimate $d_0(y,t)$ and
the unbalanced best invariant estimate $d_0^*(y,t)$.  Now, consider the issue of whether or not $d_w^*$ is
a genuine cdf for the identity case $\tau(F)=F$ supported on $\mathbb{R}$. 
First, notice that we can force $\lim_{t \to -\infty}
d_w^*(y,t)=0$ and $\lim_{t \to \infty} d_w^*(y,t)=1$, for any fixed $y$, by selecting $d_0$ and $w$ such that
$d_0$ is a genuine cdf (hence $\lim_{t \to -\infty} d_0(y,t)=0$ and $\lim_{t \to \infty} d_0(y,t)=1$) and 
$w(y,t)=1$ whenever $F_n(y,t) \in \{0,1\}$.  The monotonicity of $d_w^*$ is still not necessarily guaranteed with such
choices of $d_0$ and $w$.  However, denoting $d_0(y,t)=\sum_{i=0}^{n}
u_i \, \II(y_i \leq t <y_{i+1})$ and $d_0^*(y,t)=\sum_{i=0}^{n}
u_i^* \, \II(y_i \leq t <y_{i+1})$, it is easy to see that the condition $\min(u_{i+1},u_{i+1}^*) \geq \max(u_{i},u_{i}^*)$ for all
$i$ forces $d_w^*(y,t)= \sum_{i=0}^{n} u_{w,i}^* \, \II(y_i \leq t <y_{i+1})$ to be monotone increasing in $t$. 
This is satisfied for instance for $d_0=F_n$ and the best invariant $d_0^*$, where $u_i=i/n$ and $u_i^*=(i+1)/(n+2)$,
respectively.  We will also have monotonicity  when $w$ is a constant,   since  the target $d_0$ is a cdf and thus monotone and  monotonicity of $d^*_w$ is guaranteed by virtue of the monotonicity of $d_0^*$ established in Theorem \ref{thm.best}. Taken together, the above conditions suggest a strategy in the selection of $d_0$ and $w$ which will
lead to the best invariant estimator being a genuine cdf.
 
\begin{rem}
As in Section 2, for estimating $F$ by $d$ under loss
$L_{w,d_0}(\tau(d),F)$, the best invariant procedure is given by
$\tau^{-1}(d_{w}^*(Y,\cdot))$, for invariant $d_0$ and $w$.
\end{rem}

\begin{thm}
\label{blfminimax} For invariant $d_0$ and $w$, the best invariant
estimator $d_w^*$ of $\tau(F)$ in Theorem \ref{blfbestinvariant} is minimax
under loss (\ref{iblf}) with  $\tau$  being  H\"{o}lder continuous (i.e., $\tau \in \LL(\alpha)$).
\end{thm}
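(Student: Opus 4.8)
The plan is to use the risk decomposition of Lemma~\ref{99} to reduce the balanced problem to the weighted unbalanced one already settled in Theorem~\ref{minimax-weighted}, and then to run the Yu--Chow-type approximation that underlies Lemma~\ref{thm.eps}. First I would record two facts. Since $d_0$ and $w$ are invariant, loss~(\ref{iblf}) is invariant under the group of continuous strictly increasing transformations, so by the usual invariance argument (cf.\ Theorem~\ref{thm.best}) the risk of any equivariant procedure is constant in $F$; in particular $R_{H_1}(d_0,F)\equiv c_1$ and $R_{H_2}(d_0^*,F)\equiv c_2$, both finite because $\tau\in\LL(\alpha)$ forces $\tau$ to be bounded on $[0,1]$. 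Secondly, by Theorem~\ref{blfbestinvariant}, $d_w^*=d_0+(1-w)(d_0^*-d_0)$, so Lemma~\ref{99} gives $R_{w,d_0}(d_w^*,F)=c_1+c_2=:V$, hence $\sup_F R_{w,d_0}(d_w^*,F)=V$. Since trivially $\inf_{d\in\A}\sup_F R_{w,d_0}(d,F)\le V$, what remains is the reverse inequality.

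For that, fix $\epsilon>0$ and $d\in\A$ (we may assume $\sup_F R_{w,d_0}(d,F)<\infty$). On the intervals $[Y_i,Y_{i+1})$ with $w_i=1$ --- a fixed set, since $w$ is invariant --- the balanced loss is just $(d(t)-d_0(t))^2$, so resetting $d$ to $d_0$ there yields an estimator whose risk is no larger at any $F$; it therefore suffices to bound $\sup_F R_{w,d_0}$ from below over estimators of the form $d=d_0+(1-w)g$, with $g=(d-d_0)/(1-w)$ finite on the remaining intervals. By Lemma~\ref{99}, $R_{w,d_0}(d,F)=c_1+R_{H_2}(h,F)$ with $h:=d_0+g$, and since $\tau(F(t))$ always lies in the bounded interval $[\tau(0),\tau(1)]$, replacing $h$ by its clipping $h^\sharp$ to that interval only lowers $R_{H_2}(\cdot,F)$. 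Now I would apply the approximation of Lemma~\ref{lemma.yuchow} to $h^\sharp$ under the weighted quadratic loss $L_{H_2}$ --- exactly as in the proofs of Lemma~\ref{thm.eps} and Theorem~\ref{minimax-weighted}, which go through because the weight $(1-w_i)^2$ is positive on the retained intervals, $h^\sharp$ and the competing invariant estimators are bounded, and monotonicity is nowhere used --- to get $F_0\in\FF$ and an invariant $\tilde h\in\mathcal I$ with $|R_{H_2}(h^\sharp,F_0)-R_{H_2}(\tilde h,F_0)|\le\epsilon$. Then
$$\sup_F R_{w,d_0}(d,F)\ \ge\ c_1+R_{H_2}(h^\sharp,F_0)\ \ge\ c_1+R_{H_2}(\tilde h,F_0)-\epsilon\ \ge\ c_1+\inf_{\delta\in\mathcal I}R_{H_2}(\delta,F_0)-\epsilon\ =\ c_1+c_2-\epsilon\ =\ V-\epsilon,$$
where the penultimate step uses that $d_0^*$ is the best invariant estimator under $L_{H_2}$ (Ferguson's interval-wise minimization, the factor $(1-w_i)^2\ge0$ not affecting the minimizers) with constant risk $c_2$. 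Letting $\epsilon\downarrow0$ and taking the infimum over $d\in\A$ gives $\inf_{d\in\A}\sup_F R_{w,d_0}(d,F)\ge V$, completing the proof.

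The main obstacle is this lower-bound step --- more precisely, transporting the Yu--Chow construction (Lemma~\ref{lemma.yuchow}), set up for the unweighted integrated losses~(\ref{lossG}), to the weighted quadratic loss $L_{H_2}$ and to the new kind of competitor $h=d_0+g$, which after the substitution $d=d_0+(1-w)g$ need not be monotone or confined to $[\tau(0),\tau(1)]$ and would blow up near intervals where $w\to1$. The clipping to $h^\sharp$ and the splitting-off of the (deterministic) $w_i=1$ intervals --- on which $R_{w,d_0}$, $R_{H_1}$ and $R_{H_2}$ all receive zero contribution and where $d_w^*$ already coincides with $d_0$ --- remove those difficulties, and the H\"older hypothesis on $\tau$, through the boundedness of $\tau$ on $[0,1]$, is exactly what lets $\rho(z)=z^2$ behave on the relevant bounded range of $z$ like the bounded-derivative losses of Lemma~\ref{thm.eps}; so the argument of Theorem~\ref{minimax-weighted} does carry over, but it is the checking of these reductions, rather than the surrounding algebra, where the work lies.
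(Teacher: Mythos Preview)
Your proof is correct and follows the same route as the paper: use Lemma~\ref{99}'s decomposition $R_{w,d_0}(d_0+(1-w)g,F)=R_{H_1}(d_0,F)+R_{H_2}(d_0+g,F)$, observe that the first term is a constant $C$ by invariance of $d_0$ and $w$, and reduce to the minimaxity of $d_0^*$ under the weighted unbalanced risk $R_{H_2}$. The only difference is that the paper invokes Theorem~\ref{minimax-weighted} directly for that last step, whereas you re-run the Yu--Chow approximation inline with additional care (clipping $h=d_0+g$ to $[\tau(0),\tau(1)]$, splitting off the $w_i=1$ intervals, and noting that $\rho(z)=z^2$ has bounded derivative on the relevant bounded range) about technical points the paper passes over.
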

\begin{proof}
The proof is given in Section \ref{thm5}
\end{proof}

We conclude this section by establishing a dominance result that is quite general, and valid for any choice of a target
estimator $d_0$ (invariant or not, with constant risk or not). The only requirement is that the weight function $w$ used
for defining the integrated balanced loss be constant.

\begin{thm}
\label{dominance}
For estimating $F$ under balanced integrated loss $L_{w,d_0}$ in (\ref{iblf}) with constant weight
$w$, i.e., $w(x,t)=\alpha \,(\hbox{say}) \in (0,1)$ for all $(x,t) \in \mathbb{R}^{n+1}$, the estimator 
$\alpha d_0 + (1-\alpha) d_1$ dominates the estimator $\alpha d_0 + (1-\alpha) d_0^*\,$, where $d_1$ is an 
estimator of $F$ which dominates $d_0^*$, the best invariant estimator under integrated squared error loss
$L_0(d,F)=\int_{\RR} (d(t)-F(t))^2 dF(t)$.
\end{thm}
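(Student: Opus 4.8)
The plan is to deduce the statement directly from the risk decomposition of Lemma~\ref{99}. When the weight is held at the constant value $w(x,t)\equiv\alpha$, the two weight functions appearing in that lemma reduce to the constants $H_1(\alpha)=\alpha(1-\alpha)$ and $H_2(\alpha)=(1-\alpha)^2$, so the losses defining $R_{H_1}$ and $R_{H_2}$ become constant multiples of $L_0$. Writing $R_0(d,F)=E_F[L_0(d,F)]$ for the risk under $L_0(d,F)=\int_{\RR}(d(t)-F(t))^2\,dF(t)$, representation~(\ref{blfrep}) then specializes to
\begin{equation*}
R_{w,d_0}\big(d_0+(1-\alpha)g,\,F\big)=\alpha(1-\alpha)\,R_0(d_0,F)+(1-\alpha)^2\,R_0(d_0+g,F)\,,
\end{equation*}
for every $F\in\FF$, the first term being free of $g$.

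Next I would cast both competing estimators in the form $d=d_0+(1-w)g=d_0+(1-\alpha)g$ demanded by Lemma~\ref{99}. Taking $d=\alpha d_0+(1-\alpha)d_0^*$ forces $g=d_0^*-d_0$, whence $d_0+g=d_0^*$; taking $d=\alpha d_0+(1-\alpha)d_1$ forces $g=d_1-d_0$, whence $d_0+g=d_1$. Both $\alpha d_0+(1-\alpha)d_0^*$ and $\alpha d_0+(1-\alpha)d_1$ are convex combinations of members of $\A$, hence again lie in $\A$, and they have finite balanced risk wherever $R_0(d_0,\cdot)$ is finite, since $R_0(d_0^*,\cdot)$ is the constant risk of Corollary~\ref{bestsel} and $R_0(d_1,\cdot)\le R_0(d_0^*,\cdot)$ because $d_1$ dominates $d_0^*$. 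Substituting $g$ into the displayed identity gives, for every $F\in\FF$,
\begin{align*}
R_{w,d_0}\big(\alpha d_0+(1-\alpha)d_0^*,\,F\big)&=\alpha(1-\alpha)\,R_0(d_0,F)+(1-\alpha)^2\,R_0(d_0^*,F)\,,\\
R_{w,d_0}\big(\alpha d_0+(1-\alpha)d_1,\,F\big)&=\alpha(1-\alpha)\,R_0(d_0,F)+(1-\alpha)^2\,R_0(d_1,F)\,.
\end{align*}

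Finally, subtracting the two lines the common $d_0$-dependent term cancels, leaving
\begin{equation*}
R_{w,d_0}\big(\alpha d_0+(1-\alpha)d_0^*,\,F\big)-R_{w,d_0}\big(\alpha d_0+(1-\alpha)d_1,\,F\big)=(1-\alpha)^2\big(R_0(d_0^*,F)-R_0(d_1,F)\big)\,.
\end{equation*}
Since $d_1$ dominates $d_0^*$ under $L_0$, the right-hand side is nonnegative for all $F$ and strictly positive for at least one $F$; as $\alpha\in(0,1)$ gives $(1-\alpha)^2>0$, the same holds on the left, which is exactly the claim that $\alpha d_0+(1-\alpha)d_1$ dominates $\alpha d_0+(1-\alpha)d_0^*$ under $L_{w,d_0}$. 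At a distribution where $d_1$ strictly beats $d_0^*$ the inequality is genuine provided $R_0(d_0,\cdot)$ is finite there, which holds for any reasonable target such as $F_n$; where $R_0(d_0,\cdot)$ is infinite both balanced risks are infinite and strictness is simply read off at another such distribution.

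I do not expect a real obstacle here: the entire content of the theorem is already packaged in Lemma~\ref{99}, and the only thing worth noticing is that a constant weight turns the balanced risk into a $d_1$-independent piece plus a strictly positive multiple of the ordinary squared-error risk of the recentred estimator $d_0+g$, so that squared-error dominance transfers verbatim to balanced-loss dominance. The remaining verifications — that the convex combinations stay in $\A$ and that the relevant risks are finite where needed — are routine.
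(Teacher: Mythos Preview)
Your proof is correct and follows essentially the same route as the paper's own argument: both use the risk decomposition of Lemma~\ref{99} (specialized to constant weight $w\equiv\alpha$) to obtain the identity
\[
R_{w,d_0}\big(\alpha d_0+(1-\alpha)d_0^*,F\big)-R_{w,d_0}\big(\alpha d_0+(1-\alpha)d_1,F\big)=(1-\alpha)^2\big(R_0(d_0^*,F)-R_0(d_1,F)\big),
\]
from which the dominance conclusion is immediate. The paper simply states this identity in one line, whereas you spell out the intermediate substitutions and add remarks on membership in $\A$ and risk finiteness.
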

\begin{proof}
The proof is given in Section \ref{thm6}.
\end{proof}

Under integrated squared error loss, Brown (1988) provides such dominating estimators $d_1$ of the best invariant
estimator $d_0^*(y,t)=\sum_{i=0}^{n} (\frac{i+1}{n+2}) \, \II(y_i \leq t <y_{i+1})$.  Also, notice that the dominating
estimators of the above theorem are
necessarily minimax for invariant $d_0$ by virtue of Theorem \ref{blfminimax}.

%
\section{Application to nomination sampling}
%

Consider $n$ observations that come in the form of independent order statistics
that are of the same rank and obtained from independent samples (referred to as \textit{sets}) of  size  $k$. For instance, it could be the case
that the $n$ observations are the maxima of $n$ sets of $k$ i.i.d.~observations, and thus,
i.i.d.~themselves. Such a sampling scheme is generally referred to as \textit{nomination sampling},
a term introduced by Willemain~(1980), and more specifically as maxima-nomination sampling
in the example at hand. For further details,  see  Samawi et al.~(1996), as well as  Jafari Jozani and Johnson~(2011).
In this section, we study  two  examples of nomination sampling:  maxima and median nomination samplings. In Section \ref{case-study},   we discuss using an 
integrated balanced loss function for estimating the distribution of bilirubin levels in the
blood of babies suffering from jaundice, an application previously presented by Sawami and
Al-Sagheer~(2001).

\subsection{Maxima-nomination sampling}

 Suppose  $X=(X_{1},  \ldots, X_n)$  is a maxima
nominated sample of size $n$  with set sizes $k$, so that the $X_i$ are i.i.d.\  observations  with cdf  $F$,   $i = 1,  \ldots, n$.  
The focus here is on estimating the underlying cdf $\tau(F)=F^{1/k}$ using two competing best invariant  estimators (under different
losses). First, using  loss 
\begin{equation}
L_1(d,F) = \int_{\mathbb{R}} \big( d(t)-\tau(F(t)) \big)^2 dF(t),
\label{eq.nom.L1}
\end{equation}
with $\tau(z) = z^{1/k}$,  
Corollary~1(a) implies that the best invariant estimator $d_1^*$ of $\tau(F)=F^{1/k}$ is given by~\eqref{eq.invest}, with optimal
weights
\begin{equation*}
u_{1,i}^*= E[{T_i}^{1/k}] 
= \prod_{j=i}^{n} \left( \frac{j+1}{j+1+1/k} \right),
\end{equation*}
upon adapting the result obtained in \eqref{general-u}. Another approach consists in using loss
\begin{equation}
L_2(d,F) = \int_{\mathbb{R}} \big( d(t)-\tau(F(t)) \big)^2 d\tau(F(t))
= \int_{\mathbb{R}} \big( d(t)-\tau(F(t)) \big)^2  H(F(t))\, dF(t),
\label{eq.nom.L2}
\end{equation}
with $H(z)= \frac 1k z^{\frac 1k -1}$.
Loss $L_2$  differs from $L_1$ as it  considers an integrated distance between $d$ and $\tau(F)$ weighted according to $\tau(F)$ rather than $F$.
Following Remark \ref{rem1},
the best invariant estimator  $d_2^*$ of $\tau(F)$  is given by~\eqref{eq.invest}, with optimal weights
\begin{equation}
u_{2,i}^* 
= \frac{E[ {T_i}^{-(k-2)/k}]}{E[ {T_i}^{-(k-1)/k}]}
= \prod_{j=i}^{n} \left( \frac{j+1/k}{j+2/k} \right),
\label{eq.est2}
\end{equation}
for $i=0,  \ldots, n$.
These estimators will be compared to the MLE of $\tau(F)$ (see~Boyles and Samaniego, 1986), denoted
$d_{\text{MLE}}$, that is also of
the form~\eqref{eq.invest}, but with weights
\begin{equation}
u_{\text{MLE},i} = (i/n)^{1/k}, 
\label{eq.mle}
\end{equation}
for $i=0, \ldots, n$.
We point out  that $d_2^*$ corresponds to the LSE of $\tau(F)$ introduced by Kvam and Samaniego (1993)
when considering the special case of i.i.d.~observations.

\begin{rem} (For the case of minima-nomination sampling, suppose  $X_{1},  \ldots, X_{n}$ are  
independent minima of samples of size $k$ so that $X_i$ are i.i.d.\  observations  with distribution   $F$. The focus here is on estimating the underlying cdf $\tau(F)= 1- (1-F)^{1/k}$.  
Working with  loss functions~\eqref{eq.nom.L1} and~\eqref{eq.nom.L2} to estimate $ \tau(F) = 1-(1-F)^{1/k}$, one can easily   obtain the best
invariant (and minimax) estimators of $\tau(F)$ under losses $L_1$ and $L_2$, with weights
\begin{equation*}
u_{1,i}^* = 1-E[ (1-T_i)^{1/k}], \qquad \text{and} \qquad u_{2,i}^* = 1-\frac{E[ (1-T_i)^{-(k-2)/k}]}{E[(1-T_i)^{-(k-1)/k}]},
\end{equation*}
for $i=0, \ldots, n$.
\end{rem}

\subsection{Median-nomination sampling}

As an  another interesting example, we consider the case of median-nomination sampling of 
Muttlak~(1997).
Assuming
the set size $k$  is odd,  suppose  $X_1,  \ldots, X_{n}$ are independent medians of sets
of size $k$, so that $X_i$ are i.i.d.\ observations  with cdf  $F$. We are interested in estimating the underlying cdf $ \tau(F) = \Psi^{-1}(F)$,  with  \begin{equation*}
\Psi(F)= \sum_{j=(k+1)/2}^{k} \binom{k}{j} F^j (1-F)^{k-j},
\end{equation*}
where   $\Psi$ is the  Beta($\frac{k+1}{2}, \frac{k+1}{2}$)  distribution
function and hence  strictly increasing.   
Under loss $L_1$, the best invariant (and minimax) estimator of $\tau(F)=\Psi^{-1}(F)$ is obtained when
$u_{1,i}^* 
= E[\Psi^{-1}(T_i)]$,
while under loss $L_2$, the best invariant (and minimax) estimator of $\Psi(F)$ is obtained when
\begin{equation*}
u_{2,i}^* = E \left[ \frac{\Psi^{-1}(T_i)}{\Psi'[\Psi^{-1}(T_i)]} \right] \left\{ E \left[ \frac{1}{\Psi'[\Psi^{-1}(T_i)]} \right] \right\}^{-1}.
\end{equation*}
for $i=0, \ldots, n$. The given expectations have to be evaluated numerically.
In this context,
the MLE of $\tau(F)$ is obtained when
\begin{equation*}
u_{\text{MLE},i} = \Psi^{-1}(i/n) \qquad \text{for } i=0, \ldots, n. 
\end{equation*}

\subsection{Simulated examples}

We now study the behaviour of the proposed estimators using simulated minima and median nominated   data where the true distribution $\Phi(\cdot)$ is standard normal. First, suppose 
 $X_1,  \ldots, X_{10}$ are  i.i.d.~maxima nominated samples of size $n=10$  with  cdf $F$, when  the  set size is 
$k=5$. 
It is expected that the maxima-nomination sampling scheme would produce estimators of  the underlying cdf $\tau(F) = F^{1/k}$
that should behave quite well in the upper tail of the estimated distribution.

\begin{figure}[h!]
\begin{center}
\includegraphics[scale=0.65]{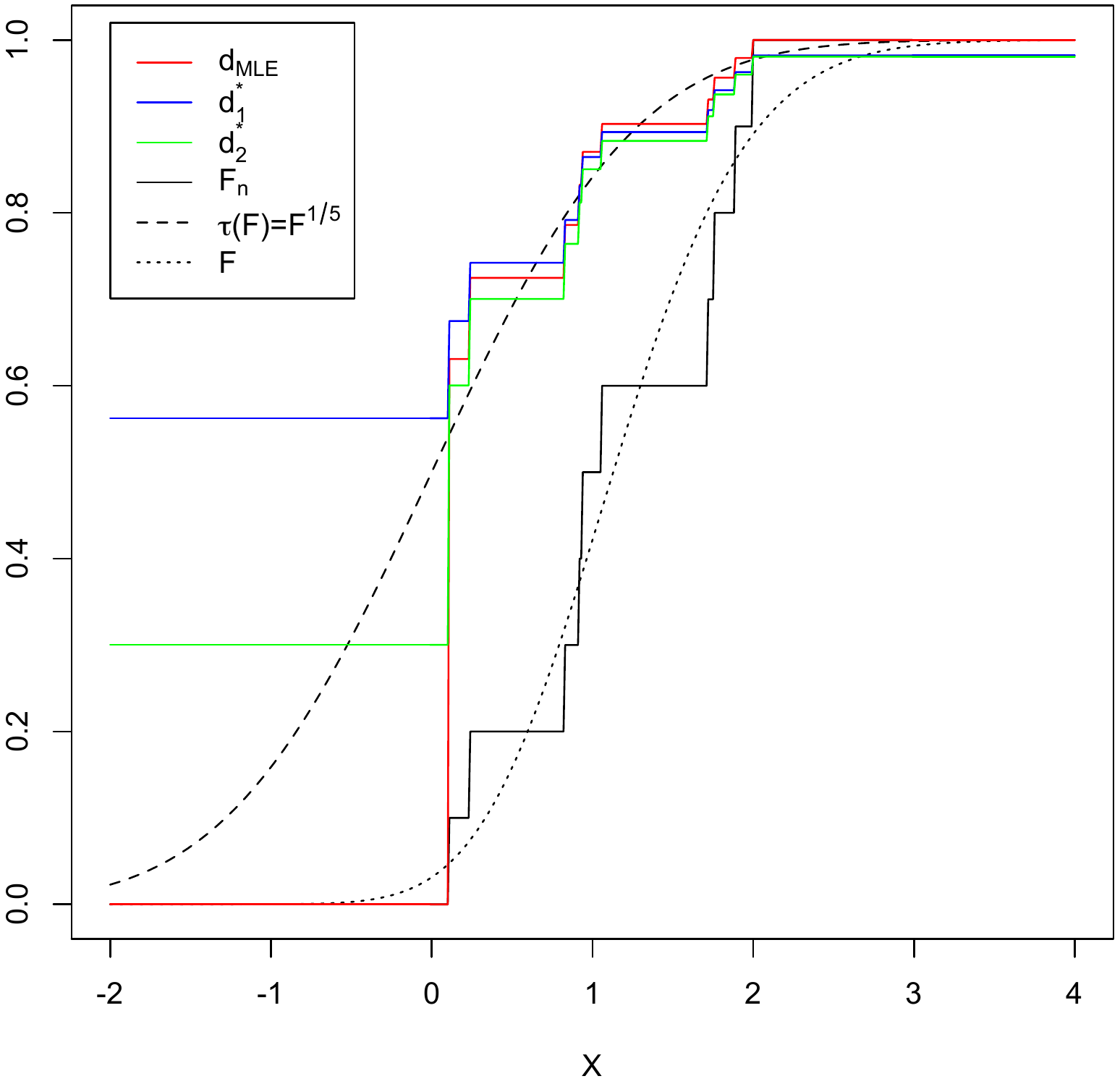}
\caption{Simulated normal maxima-nomination data ($n=10$, $k=5$).}
\label{fig.max5.1}
\end{center}
\end{figure}

\begin{figure}[h!]
\begin{center}
\includegraphics[scale=0.65]{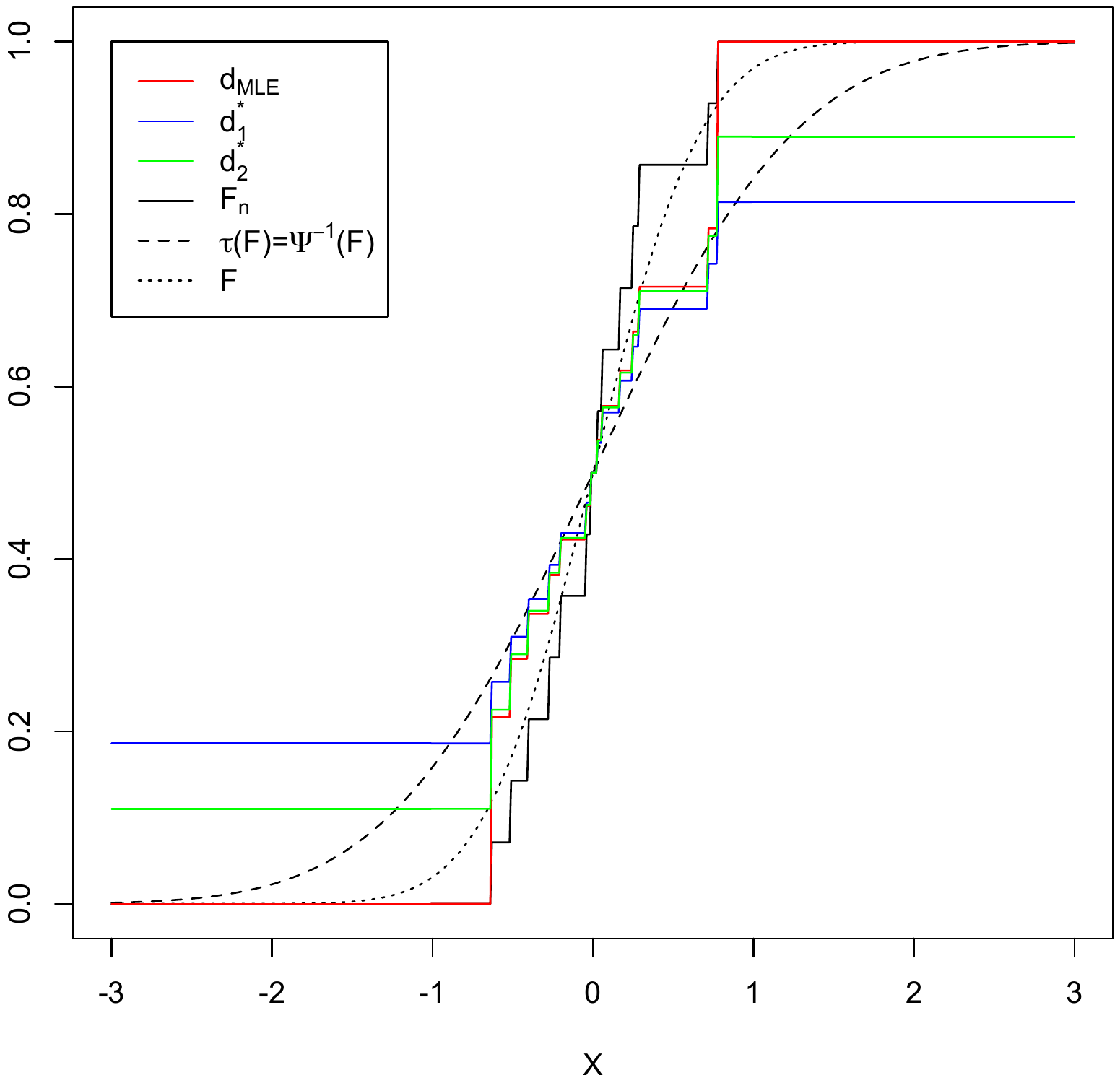}
\caption{Simulated normal median-nomination data ($n=10$, $k=5$).}
\label{fig.med}
\end{center}
\end{figure}

This is confirmed visually
through a quick inspection of Figure~\ref{fig.max5.1}  where all considered estimators
perform quite well in the right tail based on the  maxima nomination sample with $k=5$.  Similar behaviour   was observed in the cases where   $k=3$ and $7$, but results are not reported here. 
In Figure~\ref{fig.max5.1},
it is also very interesting to see how working with $d_2^*$ over $d_1^*$ leads to improved inference.
Indeed, minimizing the integrated distance between $d$ and $\tau(F)$ by weighting that distance with
respect to $\tau(F)$ itself gives a much more sensible estimator in the left tail. This is essentially because
that left tail plays almost no role when weighting the distance with respect to $F$ (which has a much shorter
left tail than $\tau(F)$). As could be expected, the impact of this is particularly important for larger values of $k$.
The empirical distribution function $F_n$ of the raw data is also shown on all graphs, to help with the comparisons.

\begin{table}[h]
\begin{center}
\begin{tabular}{ccccccc}
\toprule
$i$ & 0 & 1 & 2 & 3 & 4 & 5 \\
\midrule
$u_{1,i}^*$ & 0.209 & 0.291 & 0.352 & 0.405 & 0.453 & 0.500 \\
$u_{2,i}^*$ & 0.125 & 0.257 & 0.332 & 0.393 & 0.448 & 0.500 \\
$u_{MLE,i}$ & 0.000 & 0.247 & 0.327 & 0.390 & 0.446 & 0.500 \\
\bottomrule
\end{tabular}
\caption{Weights of minimax estimators in the median-nomination sampling  case}
\end{center}
\end{table}

For median-nomination sampling, we also considered  the case where $n=10$ and $k=5$. 
For all estimators, the values of the weights (obtained from numerical integration, except in the case of the MLE) 
are provided in Table~1 for $i=0, \ldots,5$. The values that are not displayed in the
table can be easily recovered by symmetry of the estimators under the median-nomination sampling (i.e., $u^*_{n-i}= 1-u^*_{i}$).
We note that Samawi and Al-Sagheer (2001) suggested to use $F_n$ to estimate $\tau(F)$
without modification for values of $t$ such that $\tau(F(t)) \simeq 1/2$. Figure~\ref{fig.med}
suggests that this is reasonable, but that both tails are not captured very well
when using this sampling scheme.




\section{A case study}\label{case-study}

Hyperbilirubinemia is a medical condition which commonly affects newborn babies and
that arises when the bilirubin levels in the blood exceed 5 mg/dl. Now, bilirubin's natural
pigmentation typically causes a yellowing of the baby's skin and tissues accompanying
hyperbilirubinemia, which is known as jaundice.
The level at which the concentration of bilirubin in the blood becomes dangerous is considered
to vary between infants, but the effects of bilirubin toxicity can be permanent and include, for instance,
developmental delays and hearing loss.

In a study of bilirubin levels in the blood of babies suffering from jaundice staying in the neonatal
intensive care unit of five hospitals from Jordan, Samawi and Al-Sagheer~(2001) considered data
obtained according to a nomination sampling scheme. It is noted that ranking of the level of bilirubin
in the blood can be done visually by observing the colour of the face, chest and extremities
of babies, as the severity of jaundice is directly related to the concentration of bilirubin in the blood.
This fact is quite important as it allows easy ordering of a small number of sampled babies,
in terms of bilirubin concentration, without having to actually measure those concentrations
by running a blood test, which requires about 30 minutes for completion.

Interest lied mainly in estimating the distribution function  of Bilirubin level in the blood of 
jaundice babies (in mg/dl). Among other things, the authors were interested in recovering
the quantile of order 0.95 of Bilirubin level in the blood of 
jaundice babies. Also, as it is considered that a concentration of 17.65 mg/dl
should not be exceeded to avoid any long term repercussions on a baby's health, they considered
 the order of the quantile associated with 17.65  as another quantity of interest. Note that both of these quantities are related to
the right tail of the underlying distribution, suggesting that a maxima-nomination sampling scheme is appropriate.

We here consider the estimation of the underlying cdf $\tau(F)$ from the $n=14$ maxima listed in Table 4.1 of Samawi
and Al-Sagheer~(2001). In Figure~\ref{fig.bilirubin}, we have displayed the minimax estimator
$d_2^*$ given in~\eqref{eq.est2}, the MLE of $\tau(F)$ given in~\eqref{eq.mle} and the minimax estimator
obtained under the balanced loss
\begin{equation*}
 \int_{0}^{\infty} \big\{ w(t) \big( d_0(t)-\tau(F(t)) \big)^2 + (1-w(t)) \big( d(t)-\tau(F(t)) \big)^2
\big\} d\tau(F(t)),
\end{equation*}
where $\tau(F(t)) = \{F(t)\}^{1/5}$, the target estimator $d_0$ is the MLE of $\tau(F)$ and the weight function $w$
is such that
\begin{equation*}
w(t) = \begin{cases}
1/2 & \text{if } t < Y_n \\
1 & \text{if } t \geq Y_n.
\end{cases}
\end{equation*}
As in Theorem \ref{blfbestinvariant}, we obtain the best invariant estimator as follows 
\begin{equation*}
u_i^* = \begin{cases}
\frac{1}{2} (u_{2,i}^* + u_{\text{MLE},i}) & \text{for } i=0,1,\ldots, n-1 \\
u_{\text{MLE},n} & \text{for } i=n
\end{cases},
\end{equation*}
with $u_{2, i}^*$ given in \eqref{eq.est2} with $k=5$. 
The choice of the weighting function $w$ along with the choice $Y_0=0$ (i.e., $a=0$ and $b=\infty$, see Remark \ref{genuine}) forces $d^*$ to be a genuine distribution function.  
\begin{figure}[h]
\begin{center}
\includegraphics[scale=0.65]{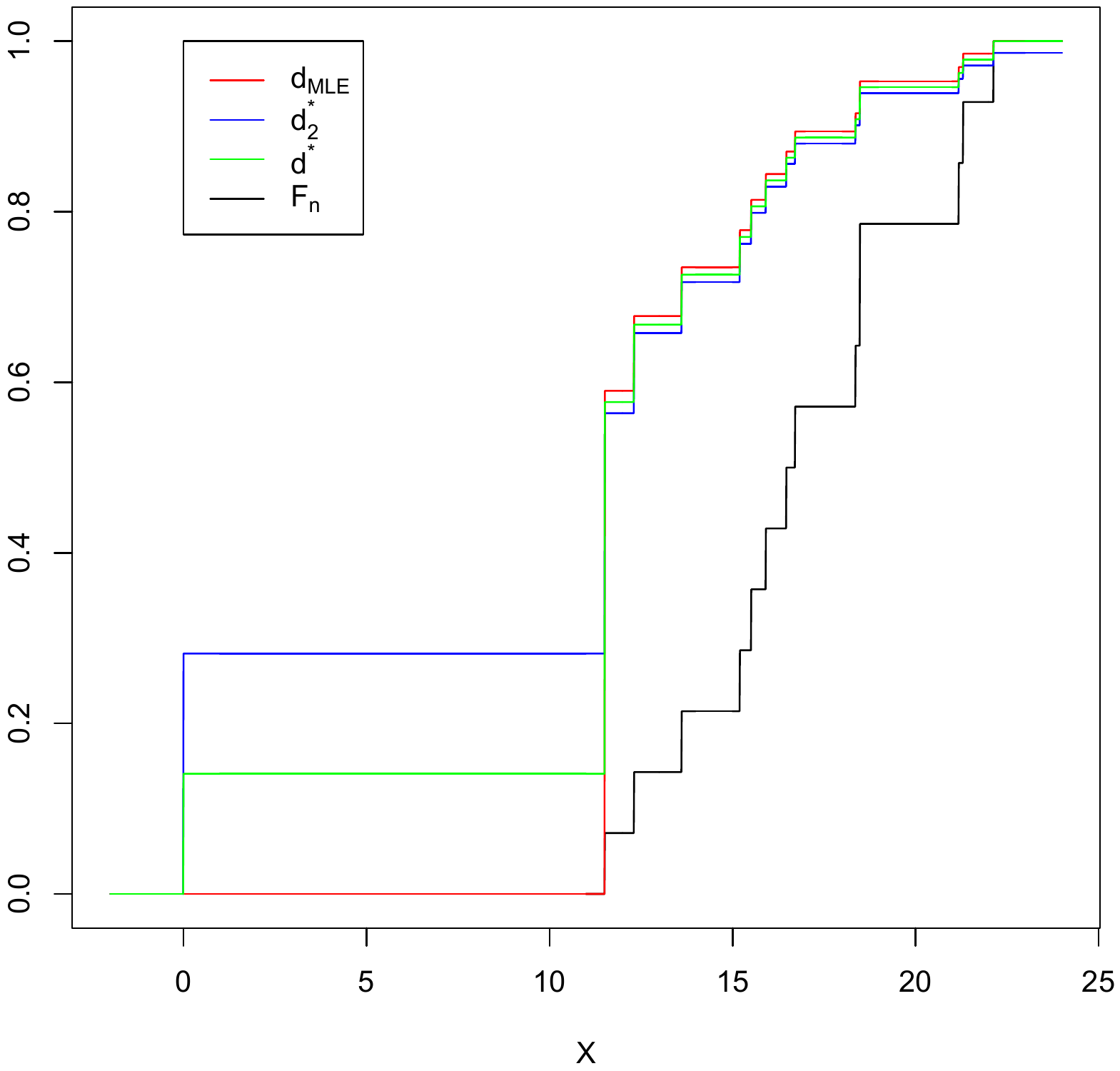}
\caption{Bilirubin concentrations data ($n=14$, $k=5$).}
\label{fig.bilirubin}
\end{center}
\end{figure}
Figure~\ref{fig.bilirubin} shows the impact of using a balanced loss approach. Again,
the difference between the estimators is most important in the left tail of the estimated distribution.
However, this is the most important aspect to consider here as all the considered estimators
seem to perform reasonably well in capturing the right tail of $\tau(F)$ in the normal example seen earlier. But,
when estimating the left tail of $\tau(F)$, the MLE clearly needs to be
improved. Using the suggested balanced loss is one way to accomplish this, while leading
to an estimated distribution that is bona fide.

%
\section{ Concluding remarks} 
%

Our findings relate to the estimation of a continuous distribution function $F$, as well as meaningful functions $\tau(F)$.  For the large class of loss functions
$L_{\rho,\tau}(d,F)$, as well as weighted versions (Section 2.3), we have obtained best invariant estimators (Section 2.1) and established their minimaxity (Section 2.2) for H\"{o}lder continuous $\tau$'s and strict bowl-shaped $\rho$ with a bounded derivative.  For identity $\tau$, our minimax result extends previously established results.
For non-identity $\tau$, the results are novel and apply as well for the minimaxity of estimators of $\tau(F)$.  Many new cases are covered such as integrated $L^p$ ($p \geq 1$) losses and integrated ratio losses of the form $\int_{\mathbb{R}} \rho_0(\frac{d(t)}{F(t)}) \, dF(t)$.  
We have also remarked upon the (known) fact that best invariant minimax solutions often fail to be genuine distribution functions, and expanded upon corresponding adjustments (Remark \ref{genuine}).  In Section 3, we introduced and motivated the use of integrated balanced loss functions which combine the criteria of an integrated distance as above between a decision $d$ and $F$, with the proximity of $d$ with a target estimator $d_0$.  Moreover, we have shown how the risk analysis of procedures under such an integrated balanced loss relates to a dual risk analysis under an ``unbalanced'' loss, and we have derived best invariant estimators, minimax estimators, risk comparisons, dominance and inadmissibility results.   
We believe that the further development of estimating procedures via integrated balanced loss functions
is of interest and appealing.  For instance, enough flexibility is built in to select a model based or fully parametric target estimator $d_0$, 
assuming for instance a normal distribution function $F$, and obtain compromise efficient procedures such as Theorem \ref{blfbestinvariant}'s $d_w^*$.

\section{Appendix}

\subsection{Proof of Theorem \ref{thm.best}}\label{thm1}
Following arguments of Ferguson (1967, Section~4.8),  the risk of $d$ in estimating $F$, for any invariant
estimator of the form~\eqref{eq.invest} and under the loss~\eqref{lossG}, may be decomposed as
\begin{eqnarray}
\nonumber R_{\rho,\tau}(d,F) &=& E_{F} \big[ \int_{\mathbb{R}}  \rho\big(\tau(\sum_{i=0}^n u_i \II(Y_i \leq t < Y_{i+1}) )- \tau(F(t))\big) \, dF(t)  \big] \\
\nonumber
\, &=& E_{F} \big[ \int_0^1  \rho\big((\sum_{i=0}^n \tau(u_i) \II(Y_i \leq F^{-1}(t) < Y_{i+1}) )- \tau(t)\big) \, dt  \big] \\
\nonumber
\,&=&  \sum_{i=0}^n    \int_0^1 \rho(\tau(u_i) - \tau(t)) \,
E_{F} (\II(F(Y_i) \leq t < F(Y_{i+1})) ) \, d t 
 \\
\label{s}
\, &=&   \sum_{i=0}^n    \int_0^1 \rho(\tau(u_i) - \tau(t)) \,
\binom{n}{i} \, t^i \, (1-t)^{n-i}\, d t \,  \\
\label{ss}
\, &=&  \frac{1}{n+1} \; \sum_{i=0}^n  \int_0^1 \rho(\tau(u_i) -\tau(t)) \, f_{T_i}(t) \, dt.
\end{eqnarray}
With the minimization problem now reducing to minimizing every element of the above sum in (\ref{s}), the results follow immediately. Also, $\tau(u_i^*)$ minimizes (\ref{s}) in $\tau(u)$ and hence satisfies the equation  $B_i(\tau(u_i^*))=0$, with
$ B_i(\tau(u)) = \int_0^1 \rho'(\tau(u) - \tau(t)) \,
\binom{n}{i} \, t^i \, (1-t)^{n-i}\, d t \,. $  Since $\rho'(\tau(0) - \tau(t)) < 0$ for all $t \in (0,1)$ given the conditions on $\rho$ and $\tau$, we have $B_0(\tau(0)) < 0$, whence $u_0>0$.  Similarly, we have $B_n(\tau(1))>0$ and $u_n<1$.  
The monotonicity property of the $u^*_i$'s follows from  complete class theorems for monotone procedures such as those provided by Karlin and Rubin (1956) or Brown, Cohen and Strawderman (1976).  Indeed, these results apply for families of densities with strict increasing monotone likelihood ratio, such as $\hbox{Bin}(n,p)$ distributions with $p \in [0,1]$, and for the problem of estimating $p$ under strict bowl-shaped loss $L(d,p)$.


\subsection{Proof of Lemma \ref{thm.eps}}\label{lem2}
Let $\delta,s > 0$ and set 
$$B_{s} = \{ (\Y, t) : | d(\Y; t) -d_0(\Y; t)| \geq s \}\,.$$
Using Lemma~\ref{lemma.yuchow}, there
exists $P_0$ (with associated distribution function $F_0$) and an estimator $d_0 \in \mathcal{I} $ such that 
\begin{equation}
P_0^{n+1} (B_{s}) \leq \delta\,.
\label{eq.probyu}
\end{equation}
Now, (i) the triangular inequality, (ii) the boundedness of $\rho'$, and (iii) the  
H\"{o}lder continuity assumption enable us to write
\begin{align*}
\big| R(d, F_0) -R(d_0,F_0) \big|
& \leq \EE_{F_0} \left[ \int_{\RR} \Big| \rho(\tau(d(\Y;t)) - \tau(F_0(t))) \,-\,
\rho(\tau(d_0(\Y;t)) - \tau(F_0(t))) \Big| dF_0(t) \right] \\
&  \leq (\sup_u \rho'(u)) \; \EE_{F_0} \left[ \int_{\RR} \Big| \Big( \tau \big( d(\Y; t) \big) - \tau \big( d_0(\Y; t) \big) \Big)
\Big| dF_0(t) \right] \\
&  \leq M \, (\sup_u \rho'(u)) \; \EE_{F_0} \left[ \int_{\RR} \Big| \tau \big( d(\Y; t) \big) - \tau \big( d_0(\Y; t) \big) \Big|^{\alpha} \; 
dF_0(t) \right].
\end{align*}
Making use twice of   Jensen's inequality for concave functions (i.e., $|z|^{\alpha}$ with $\alpha \leq 1$) yields
\begin{align}
\big| R(d,F_0) -R(d_0,F_0) \big| & \leq  M \, (\sup_u \rho'(u)) \left( \EE_{F_0} \left[ \int_{\RR} \Big| d(\Y; t) - d_0(\Y; t) \Big| dF_0(t) \right] \right)^{\alpha},
\label{eq.eps5}
\end{align}
Using the fact that $|d(\y; t) -d_0(\y; t)| \leq 1$ for all $t$, we obtain with (\ref{eq.probyu})
\begin{align*}
\EE_{F_0} \left[ \int_{\RR} \Big| d(\Y; t) - d_0(\Y; t) \Big| dF_0(t) \right] & = \int_{B_s} \big| d(\y; t) -d_0(\y; t) \big| dF_0^{n+1}(\y,t)  + \int_{B_s^c} \big| d(\y; t) -d_0(\y; t) \big| dF_0^{n+1}(\y, t) \\
& \leq P_0^{n+1}(B_s) + s \, P_0^{n+1} (B_s^c) \leq \delta +s.
\end{align*}
Finally, substituting this into ~\eqref{eq.eps5} and selecting $\delta, s$ such that  $\delta +s = {\epsilon}^{1/\alpha}\{ M (\sup_u \rho'(u))\}^{-1/\alpha}$, we obtain
$|R(d,F_0) -R(d_0,F_0)| \leq \epsilon$, as desired.

\subsection{Proof of Theorem \ref{thm.minimax}}\label{thm2}
We start by noting that Lemma~\ref{thm.eps} implies that, given $d \in \A$ and $\epsilon>0$, 
there exists $F_0 \in \FF$ and an invariant estimator $d_0 \in \mathcal{I}$ such that
$\big| R_{\tau}(d,F_0)- R_{\tau}(d_0,F_0) \big| \leq \epsilon$,
implying, in turn, that
\begin{equation*}
\sup_{F \in \FF} R_{\tau}(d^*,F) = R_{\tau}(d^*,F_0) \leq R_{\tau}(d_0,F_0) \leq R_{\tau}(d,F_0)+\epsilon \leq \sup_{F \in \FF} R_{\tau}(d,F) + \epsilon,
\end{equation*} 
given that $d^*$ is the best invariant estimator under loss~\eqref{eq.loss} with constant risk.
Since $d$ and $\epsilon$ are both arbitrary, the stated result follows.


\subsection{Proof of Theorem \ref{minimax-weighted} }\label{thm3}
For the best invariant property, proceeding as in the proof of Theorem \ref{thm.best}
yields the result.  For instance, equation (\ref{s}) becomes $$\sum_{i=0}^n w_i   \int_0^1 \rho(\tau(u_i) - \tau(t)) \,
\binom{n}{i} \, t^i \, (1-t)^{n-i}\, d t \,,$$
and it is clearly seen that the minimization is handled irrespectively of the weights $w_i$'s.  For the minimaxity, the developments of Section 2.2 go through by simply bounding $w_n(\cdot)$ by $1$.


\subsection{ Complementary developments and proof of Lemma \ref{99}}\label{lem3}
The representations below are used in Lemma \ref{99} and
generalize Lemma 1 of Jafari Jozani, Marchand and Parsian (2006).
The general context is one of estimating a parameter $\theta$ for
the model $Z \sim F_{\theta}$ with loss
\begin{equation}
\label{blf}
L_{\omega(\cdot),\delta_0}(\delta,\theta)= q(\theta)\, w(z) \,(\delta-\delta_0)^2 \,+ \,q(\theta) (1-w(z)) \, (\delta-\theta)^2,
\end{equation}
where $w(\cdot) \in [0,1]$, $q(\cdot)>0$, and $\delta_0$ is a
target estimator of $\theta$. Under loss (\ref{blf}), it is easy
to check that
\begin{equation}
\label{blfrep1}
L_{\omega(\cdot),\delta_0}(\delta_0+(1-w)g,\theta)= \,q(\theta)\, w(1-w) \,(\delta_0-\theta)^2 \,+ \,q(\theta) (1-w)^2 \, (\delta_0+g-\theta)^2.
\end{equation}
We hence obtain that the risk of the estimator $\delta_0(Z)+(1-w(Z))\,g(Z)$ under loss $L_{\omega(\cdot),\delta_0}$ is decomposable as
\begin{eqnarray}
\label{blfrep2}
\nonumber R_{\omega(\cdot),\delta_0}(\delta_0(Z)+(1-w(Z))g(Z),\theta)&=& E[\,q(\theta)\, w(Z)(1-w(Z)) \,(\delta_0(Z)-\theta)^2] \\
\,&+&
 E[\,q(\theta) (1-w(Z))^2 \, (\delta_0(Z)+g(Z)-\theta)^2]\,,
\end{eqnarray}
i.e., the sum of the risks of $\delta_0(Z)$ and $\delta_0(Z)+g(Z)$
with respect to the weighted squared error losses $q(\theta) \, w(z)(1-w(z)) (\delta-\theta)^2$
and $q(\theta)\, (1-w(z))^2 \, (\delta-\theta)^2$, respectively.
Since the former of these risks does not depend on $g(\cdot)$,
we have an equivalence between the performance of the estimator
 $\delta_0(Z)+(1-w(Z))\,g(Z)$ under balanced loss (\ref{blf})
 and the estimator $\delta_0(Z)+\,g(Z)$ under the second of these weighted (and unbalanced) losses.
  This observation was put forward at the outset of the paper by Jafari Jozani,
   Marchand and Parsian (2006) for the particular case where $w(\cdot)$ is constant
   and they pursued with various connections between the balanced loss and unbalanced
   loss problems as well as applications.  A redeployment of their analysis for non-constant
   weight functions $w(\cdot)$ is available with the above decomposition and of interest.
   Now, to conclude, expression (\ref{blfrep1}) is used in Lemma \ref{99} with for fixed $(x,t) \in \mathbb{R}^{n+1}$ with $X=Z$, $\theta=\tau(F(t))$,
   $q(\cdot) =1$ $\delta_0=d_0(x,t)$, $\delta_0+(1-w)g = d_0(x,t)+(1-w(x))\, g(x,t)$.
   
   To prove Lemma \ref{99},  expanding (\ref{iblf}), we have $L_{w,d_0}
(d_0(x,t)+(1-w(x,t) g(x,t), F\,)=$
$$
\int_{\RR} \{w(x,t)(1-w(x,t)) \,[d_0(x,t)-\tau(F(t))]^2 \, + \,
(1-w(x,t))^2  \,[d_0(x,t)+g(x,t)-\tau(F(t))]^2 \, \} \,
 \,dF(t).
$$  The result thus
follows at once from (\ref{blfrisk}).

\subsection{Proof of Theorem \ref{blfbestinvariant}}\label{thm4}
 First, observe that $d_0^*$ is the best
invariant procedure under loss $L_0$, and thus for risk $R_{H_2}$ by virtue of
Theorem \ref{minimax-weighted} as $w$ is invariant.
 Now, under the assumptions on $d_0$ and $w$, we see from
(\ref{blfrep}) that the risk $R_{w,d_0}$ of invariant
estimators is constant with the optimal choice of $g$
arising for $d_0+g=d_0^*$, which gives $d_{w}^* = d_0 + (1-w)
(d_0^*-d_0) = wd_0 + (1-w) d_0^*$. 


\subsection{Proof of Theorem \ref{blfminimax}}\label{thm5}
 In representation (\ref{blfrep}), observe
that the first term $R_{H_1}(d_0(X,\cdot),F)(=C)$ is constant in
$F$ since $d_0$ is invariant by assumption.  Furthermore, Theorem
\ref{minimax-weighted} tells us, with the choice $\rho(z)=z^2$, that $\sup_F \{R_{H_2}(d_0+g,F)\} \geq \sup_F
\{R_{H_2}(d_0^*,F)\}$ for all $g$.  We hence obtain for any
estimator $d_0 + (1-w) \, g \in
\cal{A}$:
\begin{eqnarray*}
\sup_{F \in \FF}\, \{R_{w,d_0}(d_0+(1-w)g,F \}&=& C + \sup_{F \in \FF} \{R_{H_2}(d_0+g,F)\}
\\ \,& \geq & C+ \sup_{F \in \FF} \{R_{H_2}(d_0^*,F) \} \\
\, &=& \sup_{F \in \FF}\, \{R_{w,d_0}(d_0+(1-w)(d_0^*-d_0),F \} \\
\, &=& \sup_{F \in \FF}\, \{R_{w,d_0}(d_w^*,F \}\,,
\end{eqnarray*}
which yields the result.


\subsection{Proof of Theorem \ref{dominance}}\label{thm6}
This follows directly from expressing the difference in risk of the two estimators as
\begin{equation*}
R_{w,d_0}(\alpha \,d_0 + (1-\alpha) \, d_0^*,F) - R_{w,d_0}(\alpha d_0 + (1-\alpha) d_1,F) = (1-\alpha)^2 \{R_0(d_0^*,F)
-R_0(d_1,F)\} \geq 0,
\end{equation*}
for all $F$, with strict inequality for some.  

%
%

\section*{Acknowledgments}
All three authors  gratefully acknowledge the research support
of the Natural Sciences and Engineering Research Council of Canada.

%
%

\end{document}